\newcommand{\R}{\mathbb{R}}
\newcommand{\eqdef}{\stackrel{\text{def}}{=}}
\newcommand{\ve}[2]{\left\langle #1 , #2 \right\rangle}
\def\<#1,#2>{\left\langle #1,#2\right\rangle}
\newtheorem{theorem}{Theorem}
\theoremstyle{plain}
\newtheorem{prop}[theorem]{Proposition}
\newtheorem{lem}[theorem]{Lemma}
\theoremstyle{plain}
\newtheorem{ass}[theorem]{Assumption}
\begin{document}

\title{Simple Complexity Analysis of Simplified Direct Search}
\author{Jakub Kone\v{c}n\'{y} \footnote{School of Mathematics,  University of Edinburgh, United Kingdom (e-mail: j.konecny@sms.ed.ac.uk)} \qquad \qquad  Peter Richt\'{a}rik \footnote{School of Mathematics,  University of Edinburgh, United Kingdom (e-mail: peter.richtarik@ed.ac.uk) \qquad \qquad
The work of both authors was supported by the Centre for Numerical Algorithms and Intelligent Software (funded by EPSRC grant EP/G036136/1 and the Scottish Funding Council). The work of J.K. was also supported by a Google European Doctoral Fellowship and the work of P.R. was supported by the EPSRC grant  EP/K02325X/1.}
\\\\
 {\em School of Mathematics}\\
{\em  University of Edinburgh}
\\
{\em United Kingdom}}

\date{September 30, 2014 (Revised\footnote{In this revision (minor revision in SIAM J Opt) the main results stay the same, but we have further simplified some proofs, added some new results and further streamlined the exposition.} : November 13, 2014)}

\maketitle

\begin{abstract} 
We consider the problem of unconstrained minimization of a smooth function in the derivative-free setting using. In particular, we propose and study a simplified variant of the direct search method (of direction type), which we call {\em simplified direct search (SDS)}. Unlike standard direct search methods, which depend on a large number of parameters that need to be tuned, SDS depends on a single scalar parameter only. 




Despite relevant research activity in direct search methods spanning several decades,  complexity guarantees---bounds on the number of function evaluations needed to find an approximate solution---were not established until very recently.  In this paper we give a surprisingly  {\em brief} and {\em unified analysis} of SDS for nonconvex, convex and strongly convex  functions. We match the existing complexity results for direct search in their dependence on the problem dimension ($n$) and error tolerance ($\epsilon$), but the overall  bounds are  simpler,  easier to interpret, and have better dependence on other problem parameters. In particular, we show that  for the set of directions formed  by the standard coordinate vectors and their negatives, the number of function evaluations needed to find an $\epsilon$-solution is $O(n^2 /\epsilon)$ (resp.  $O(n^2 \log(1/\epsilon))$) for the problem of minimizing a convex (resp. strongly convex) smooth function. In the nonconvex smooth case, the bound is $O(n^2/\epsilon^2)$, with the goal being the reduction of the norm of the gradient below $\epsilon$.

\bigskip

\end{abstract}

\paragraph{Keywords:} simplified direct search,  derivative-free optimization, complexity analysis, positive spanning set, sufficient decrease.

\newpage
\section{Introduction}

In this work we study the problem of unconstrained minimization of a smooth function $f: \R^n \to \R$:
\begin{equation} \label{eq:main}\min_{x \in \R^n} f(x).\end{equation}
We assume that we only have access to a function evaluation oracle; that is, we work in the derivative-free setting.  In particular, we study a simplified variant of the direct search method of directional type, which we call {\em Simplified Direct Search} (SDS), and establish complexity bounds for nonconvex, convex and strongly convex objective functions $f$. That is, we prove bounds on the number of function evaluations  which lead to the identification of an approximate solution of the optimization problem.


 Despite the effort by a community of researchers spanning more than a half century \cite{hookejeeves, wenpositivebasis, Torczon97, dolanlewis, audet02-gps, kolda03, abramson06, audet06-meshadapt, audet-orban06, DS-discont}, complexity bounds for direct search have not been established until very recently in a sequence of papers by Vicente and coauthors \cite{vicentenonsmooth, vicente, Garmanjani-Vicente12, zaikun14}.  To the best of our knowledge, the first complexity result was established  by Vicente   in the case when $f$ is smooth (and possibly nonconvex)  \cite{vicentenonsmooth}, with the goal being the identification of a point $x$ for which $\|\nabla f(x)\|\leq \epsilon$. In this  work, it was  shown that direct search will find such a point in $O(n^2/\epsilon^2)$ function evaluations. Subsequently,  Dodangeh and Vicente \cite{vicente} studied the case when $f$ is convex (resp. strongly convex) and proved the complexity bound $O(n^2/\epsilon)$ (resp.  $O(n^2\log (1/\epsilon))$). Garmanjani and Vicente \cite{Garmanjani-Vicente12} established an  $O(n^2/\epsilon^4)$ bound in the case when $f$ is nonsmooth and nonconvex. Finally,  Gratton, Royer, Vicente and Zhang \cite{zaikun14} studied direct search with probabilistic descent.

\subsection{Simplification of Direct Search}  

The direct search method, in its standard form studied in the literature (e.g., \cite{vicentenonsmooth, vicente}), depends on a large number of parameters and settings. A typical setup:
\begin{itemize}
\item uses 6 scalar parameters: $c>0$ (forcing constant), $p>1$ (exponent of the forcing function), $\alpha_0>0$ (initial stepsize), $0<\beta_1\leq  \beta_2<1$ (stepsize decrease factors), $\gamma\geq 1$ (stepsize increase factor),
\item includes a ``search step'' and  a ``poll step'',
\item allows for the set of search directions (in the poll step) to change throughout the iterations (as long as their cosine measure is above a certain positive number), 
\item and allows the search directions to have arbitrary positive lengths.
\end{itemize} 

Admittedly, this setup gives the method  flexibility, as in certain situations one may wish to utilize prior knowledge or experience about the problem at hand to find a mix of parameters that works better than another mix. On the other hand, this flexibility is problematic as one has to decide on how to choose these parameters, which might be a daunting task even if one has prior experience with various parameter choices. The issue of optimal choice of parameters is  not discussed in existing literature. 

Let us now look at the situation from the point of view of complexity analysis. While still present in the standard form of direct search, it  has been recognized  that the ``search step'' does not influence the complexity analysis  \cite{vicentenonsmooth, vicente}. Indeed, this step merely provides a plug for the inclusion of  a clever heuristic, if one is available.  Moreover, direct search methods use a general forcing function\footnote{Direct search accepts a new iterate only if the function value has been improved by at least $
\rho(\alpha)$, where $\alpha>0$ is the current stepsize.}, usually of the form $\rho(\alpha)=c\alpha^p$, where $c>0$ is a forcing constant and $p >1$.  It can be easily inferred from the  complexity results of Vicente covering the nonconvex case \cite{vicentenonsmooth}, and it is made explicit in \cite{vicente},  that  the  choice $p=2$  gives a better bound than other choices. Still, it is customary to consider the more general setup with arbitrary $p$. From the complexity point of view, however, one  does not need to consider the search step, nor is there any need for flexibility in the choice of $p$. Likewise, complexity does not improve by the inclusion of the possibility of replacing the set of search directions at every iteration  as it merely depends on the smallest of the cosine measures of all these sets. In this sense, one can simply fix a single set of directions before the method is run and use that throughout.  

The question that was the starting point of this paper was:

\begin{quote}Which of the parameters and settings of direct search (in its standard form  studied in the literature) are {\em required} from the complexity point of view? If we remove the unnecessary parameters, will it be possible to gain more insight into the workings of the method and possibly provide compact proofs leading to simpler and better bounds?  \end{quote}

In contrast with the standard approach, SDS depends on a single parameter only (in relation to standard direct search, we fix $\beta_1=\beta_2=0.5$, $p=2$, $\gamma=1$). As presented in Section~\ref{sec:DSO}, our method seems to depend on two parameters, initial stepsize $\alpha_0>0$ and  forcing constant\footnote{We shall see that it is optimal to choose $c$ to be equal to the Lipschitz constant of the gradient of $f$. If we have some knowledge of this, which one can not assume in the derivative-free setting, it makes it easier to set $c$.} $c>0$.  However, we show in Section~\ref{sec:init}  that one can, at very low cost, identify suitable $\alpha_0$ {\em or} $c$ automatically, removing the dependence on one of these parameters. Moreover, we exclude the (irrelevant-to-complexity-considerations) ``search step'' altogether, keep just a single set of search directions $D$ throughout the iterative process, and keep them all of unit length (none of the extra flexibility leads to better complexity bounds). In fact, we could have gone even one step further and fixed $D$ to be a very particular set: $D_+=\{\pm e_i, \; i=1,2,\dots,n\}$, where $e_i$ is the $i$th unit coordinate vector (or a set obtained from this by a rotation). Indeed, in all our complexity results, the choice of $D$ enters the complexity via the fraction $|D|/\mu^2$, where $\mu$ is the cosine measure of $D$ (we shall define this in Section~\ref{sec:directions}). We conjecture that  $|D|/\mu^2 \geq n^2$ for any set $D$ for which $\mu>0$. It can be easily seen that this lower bound is achieved, up to a factor of 2, by $D_+$: $|D_+|/\mu^2 = 2n^2$. However, we still decided to keep the discussion in this paper  general in terms of the set of directions $D$ and formulate the algorithm and results that way --  we believe that allowing for arbitrary directions is necessary to retain the spirit of direct search.   As most optimization algorithms, including direct search, SDS also depends on the choice of an initial point $x_0$.






\subsection{Outline}

The paper is organized as follows. We first summarize the contributions of this paper (Section~\ref{sec:contributions}) and then describe our algorithm (Section~\ref{sec:DSO}). In Section~\ref{sec:init} we propose three initialization strategies for the method. Subsequently, in Section~\ref{sec:directions} we formulate our assumptions on the set of directions (positive spanning set). In Section~\ref{sec:main_results} we state and prove our main results, three complexity theorems covering the nonconvex, convex and strongly convex cases. Finally, we conclude in Section~\ref{sec:conclusion}.

\section{Contributions} \label{sec:contributions}


In this section we highlight some of the contributions of this work.

\textbf{1. Simplified algorithm.} We study a novel variant of direct search, which we call {\em simplified direct search} (SDS). While SDS retains the spirit of ``standard'' direct search in the sense that it works with an arbitrary set of search directions forming a positive spanning set, it {\em depends on a single parameter} only: either the forcing constant $c$, or the initial stepsize parameter $\alpha_0$ -- depending on which of two novel initialization strategies is employed.

\textbf{2. Two new initialization strategies.} We describe three initialization strategies, two of which are novel and serve the purpose of automatic tuning of certain  parameters of the method (either the forcing constant or the initial stepsize). The third initialization strategy is {\em implicitly} used by  standard direct search (in fact, it is equivalent to running direct search until the first unsuccessful iteration). However, we argue that this strategy is not particularly efficient, and that it does not remove any of the parameters of the method.

\textbf{3. Simple bounds.} As a byproduct of the simplicity of our analysis we obtain compact and easy to interpret complexity bounds, with small constants.  In Table~\ref{tab:summary}  we summarize selected complexity results (bounds on the number of function evaluations) obtained in this paper. In addition to what is contained therein, we also give bounds on $\|\nabla f(x_k)\|$ in the convex and strongly convex cases, and a bound on $\|x_k-x_*\|$ in the strongly convex case -- see Theorems \ref{thm:convex} and \ref{thm:strconvex}.

{

\begin{table}[!h]
\begin{center}

\footnotesize

\begin{tabular}{|c|c|c|c|c|}
\hline
Assumptions on $f$ &  Goal & 
\begin{tabular}{c}
Complexity \\
($c=O(1))$
\end{tabular} 
&  \begin{tabular}{c}
Complexity\\
 ($c=L/2$) 
 \end{tabular}
 & Thm \\
\hline
& & & & \\
\begin{tabular}{c}
no additional \\
assumptions  
\end{tabular}
& $\|\nabla f(x)\|<\epsilon$ & 
$O\left(\frac{n^2 L^2 (f(x_0)-f^*)}{\epsilon^2}\right)$
&
$O\left(\frac{n^2 L (f(x_0)-f^*)}{\epsilon^2}\right)$ & \ref{thm:nonconvex} \\
& & & & \\
\hline
& & & & \\
\begin{tabular}{c}
convex\\
$\exists$ minimizer $x_*$\\
$R_0<+\infty$
\end{tabular}  
& $f(x)-f(x_*)\leq \epsilon$ &  
$O\left(\frac{n^2 L^2 R_0^2}{\epsilon}\right)$
&
$O\left(\frac{n^2 L R_0^2}{\epsilon}\right)$ & \ref{thm:convex}\\
&& &  &\\
\hline
&& & &\\
\begin{tabular}{c}$\lambda$-strongly\\
 convex 
\end{tabular} 
 & $f(x)-f(x_*) \leq \epsilon$ & 
$O\left(\frac{n^2 L^2}{\lambda} \log\left(\frac{nL^2\alpha_0^2}{\lambda \epsilon}\right)\right)$
&
$O\left(\frac{n^2 L}{\lambda}\log\left(\frac{nL^2\alpha_0^2}{\lambda \epsilon}\right)\right)$ & \ref{thm:strconvex} \\
& & && \\
\hline
\end{tabular}

\caption{Summary of the complexity results obtained in this paper.}\label{tab:summary}

\end{center}

\end{table}

}

In all cases we assume that $f$ is $L$-smooth (i.e., that the gradient of $f$ is $L$-Lipschitz) and bounded below by $f^*$; the assumptions listed in the first column are {\em additional} to this.   All the results in the table are referring to the setup with $D=D_+ = \{\pm e_i, i=1,2,\dots,n\}$. The general result is obtained by replacing the $n^2$ term by $|D|/\mu^2$, where $\mu$ is the cosine measure of $D$ (however, we conjecture that the ratio $|D|/\mu^2$ can not be smaller than $n^2$ and hence the choice $D=D_+$ is optimal). The quantity $R_0$ measures the size of a specific level set of $f$. Definitions of all quantities appearing in the table are given in the main text.

Notice that the choice of the forcing constant $c$ influences the complexity. It turns out that the choice $c=L/2$ minimizes the complexity bound, which then depends on $L$ linearly. If $c$ is a constant, the dependence becomes quadratic. Hence, the quadratic dependence on $L$ can be interpreted as the price  for not knowing  $L$. 


\textbf{4. Brief and unified analysis, better bounds.} In contrast with existing results, we provide a  {\em brief and unified}  complexity analysis of direct search covering the nonconvex, convex and strongly convex cases.  In particular, the proofs of our complexity theorems covering the nonconvex, convex and strongly convex cases are 6, 10 and 7 lines long, respectively, and follow the same pattern. That is, we show that in all three cases, we have the bound
\[\left\| \nabla f(x_k) \right\| \leq \frac{(\tfrac{L}{2} + c  ) \alpha_0}{ \mu 2^k},\]
where $\{x_k\}$  are the ``unsuccessful'' iterates. We then show that in the convex case this bound implies a bound on $f(x_k)-f(x_*)$ and in the strongly convex case also on $\|x_k-x_*\|$. The difference between the three cases is that the amount of work (function evaluations) needed to reach iterate $k$ differs as we are able to give better bounds in the convex case and even better bounds in the strongly convex case. 

\begin{itemize}
\item \textbf{Nonconvex case.} In the nonconvex case, a relatively brief complexity analysis for direct search was already given by Vicente \cite{vicentenonsmooth}. However, it turns out that the same analysis {\em simplifies substantially} when rewritten to account for  the simplified setting we consider. While it is an easy exercise to check this, this simple observation was not made in the literature before. Moreover,  our proof is different  from this simplified proof.  In more detail, the approach in \cite{vicentenonsmooth} is to  first bound the number of successful steps following the first unsuccessful step, then to bound the unsuccessful steps, and finally to bound the number of steps till the first unsuccessful step. The result is the sum of the three bounds. The complexity theorem assumes that the method converges (and a separate proof is needed for that). In contrast, we do not require a convergence proof to proceed to the complexity proof.  Also, we show that in SDS it is the number of ``unsuccessful'' steps which directly determine the quality of the solution; while the number of ``successful steps'' provides a bound on the workload (i.e., number of function evaluations) needed to find that solution.

\item \textbf{Convex and strongly convex case.} Existing analysis in the convex and strongly convex cases seems to be  more involved and longer \cite[pages 8-18]{vicente} than the analysis in the nonconvex case \cite{vicentenonsmooth}. However, we observe the  analysis and results in \cite{vicente} would  simplify in our simplified setting (i.e., for SDS). Still, the complexity results are weaker than our bounds. For instance, Theorem 4.1 in \cite{vicente}, giving bounds on function values in the convex case, has {\em quadratic dependence} on $R_0$ (we have linear dependence).  We should also remark that in some sense, the setting in \cite{vicente} is already simplified; in particular,  the analysis in  \cite{vicente} only works under the {\em assumption}  that the stepsizes of the  method do not grow above a certain constant $M$.  Note that SDS removes stepsize increases altogether.

\end{itemize}

 \textbf{5. Extensions.} The simplicity of the method and of the analysis makes it easier to propose extensions and improvements. For instance, one may wish to study the complexity of SDS for different function classes (e.g., convex and nonsmooth), different optimization problem (e.g., stochastic optimization, constrained optimization) and  for variants of SDS (e.g., by introducing randomization).

\textbf{6. Wider context.} We now  briefly review some of the vast body of literature on derivative-free optimization. 

It is well known \cite[Section 1.2.3]{nesterovIntro} that  for the problem of unconstrained minimization of a smooth (and not necessarily convex) function, gradient descent takes at most $\mathcal{O}(1/\epsilon^2)$ iterations to drive the norm of the gradient below $\epsilon$. Such a bound has been proved tight in~\cite{cartis}. In the context of derivative-free methods, Nesterov's random Gaussian approach~\cite{nesterovRandomGaussian} attains the complexity bound  $\mathcal{O}(n^2 / \epsilon^2)$. Vicente matches this result with a (deterministic) direct search algorithm~\cite{vicentenonsmooth}, and so does our analysis of direct search. Cartis et al.~\cite{cartisOracle} derived a bound of $\mathcal{O}(n^2 / \epsilon^{3/2})$ for  a variant of their adaptive cubic overestimation algorithm  using finite differences to approximate derivatives. In this setting, Ghadimi and Lan~\cite{lan} achieve better (linear) dependence on $n$ by considering a slightly more special class of problems.

In the convex case, gradient descent achieves the improved bound of $\mathcal{O}(1 / \epsilon)$ \cite[Section 2.1.5]{nesterovIntro}. For derivative-free methods, this rate is also achievable by Nesterov's random Gaussian approach~\cite{nesterovRandomGaussian}  ($O(n/\epsilon)$) and by direct search \cite{vicente} ($O(n^2/\epsilon)$). We match the latter result in this paper.

Optimal (also known as accelerated/fast) gradient methods employ a two step strategy, and enjoy the complexity bound of  $\mathcal{O}(1 / \epsilon^{1/2})$ iterations \cite[Section 2.2.1]{nesterovIntro}. The derivative-free analogue of this method, also developed by Nesterov \cite{nesterovRandomGaussian},  needs $\mathcal{O}(n / \epsilon^{1/2})$ function evaluations. To the best of our knowledge, there are no results on (non-randomized) direct search methods that would attain this bound.

In the strongly convex setting, gradient descent achieves linear convergence, i.e., the bound on number of iterations is $\mathcal{O}(\log(1 / \epsilon))$. This rate is also achievable in derivative-free setting by multiple methods~\cite{vicente, nesterovRandomGaussian, scheinbergBook}, including our version of direct search. 

A recent work of Recht et al. \cite{recht} goes beyond the zero-order oracle. Central in their work is a pairwise comparison oracle, that returns only the order of function values at two different points. They provide lower and upper complexity bounds  for both deterministic and stochastic oracles. A related randomized coordinate descent algorithm is proposed, that also achieves $\mathcal{O} (n\log(1 / \epsilon))$ calls of the oracle for strongly convex functions.  Duchi et al. \cite{duchi} prove tight bounds for online bandit convex optimization problems with multi-point feedback. However, the optimal iteration complexity for single point evaluation still remains an open problem. Yet another related approach, where one has access to partial derivatives, is the randomized coordinate descent method \cite{coordinatedescent}. The iteration complexity of the method is $\mathcal{O}(n / \epsilon)$ in the convex case and $\mathcal{O}(n \log(1 / \epsilon))$ in the strongly convex case. This method can be extended to the derivative-free setting by considering finite difference approximation of partial derivatives.


 \section{Simplified Direct Search} \label{sec:DSO}

In Section~\ref{sec:DSO-methodsimple} we  describe the Simplified Direct Search (SDS) method -- in a user-friendly notation (Algorithm~\ref{alg:dfo-minimal}). Subsequently, in Section~\ref{sec:DSO-method-analysis} we rewrite the method into an analysis-friendly notation (Algorithm~\ref{alg:dfo}) and collect in a lemma a few elementary observations which will be useful later.

\subsection{The method: user-friendly notation} \label{sec:DSO-methodsimple}

SDS (Algorithm~\ref{alg:dfo-minimal}) works with a fixed finite set $D$  of vectors in $\R^n$ forming a positive spanning set (we shall discuss this in detail in Section~\ref{sec:directions}). The mehod is only allowed to take steps of positive lengths, along directions $d\in D$. That is, every update step  is of the form \begin{equation}\label{eq:sjd878dd} x\leftarrow x+ \alpha d,\end{equation} 
for some stepsize $\alpha>0$ and  $d\in D$.  Updates of this form are repeated while they lead to a function decrease of at least $c\alpha^2$ (these steps are called ``successful steps'' and the decrease is called  ``sufficient decrease''), where $c>0$ is a ''forcing constant'' that remains fixed throughout the iterative process. That is, we move from $x$ to $x+\alpha d$ if $d\in D$ is found for which $f(x+\alpha d) \leq f(x)-c\alpha^2$.

\begin{algorithm}[!h]
\vspace{0.5em}
\begin{enumerate}
\item INPUT: starting point $x\in \R^n$; stepsize $\alpha > 0$;  forcing constant $c>0$; finite set $D\subset \R^n$
\item Repeat
\begin{itemize}
\item $\alpha \leftarrow \tfrac{1}{2} \alpha$
\item {\bf while} there exists $d\in D$ such that $f(x+\alpha d) \leq f(x)-c\alpha^2$
\begin{itemize} 
\item[] set $x \leftarrow x+ \alpha d$
\end{itemize}
\end{itemize}
\end{enumerate}
\caption{Simplified Direct Search (SDS): user-friendly form}
\label{alg:dfo-minimal}
\end{algorithm}

There are several ways how, given a stepsize $\alpha$, a vector $d\in D$ leading to sufficient decrease can be found; the method does not prescribe this in more detail. For instance, one may simply order the directions in $D$ and search through them one by one, accepting the {\em first one} that leads to sufficient decrease. Or, one may search through all directions, and if more lead to sufficient decrease,  pick the {\em best one}. Needless to say, the search for a direction can be easily {\em parallelized}. In summary, the method (and our  analysis) is agnostic about the way the selection of direction $d\in D$ leading to sufficient decrease is implemented.

Once no step of the form \eqref{eq:sjd878dd} leads to sufficient decrease, i.e., if $f(x+\alpha d) > f(x)-c\alpha^2$ for all $d\in D$ (we call such steps ``unsuccessful''), we do not\footnote{In practice, one would update $x$ to the best of the points $x+\alpha d$, $d\in D$, if any of them has a smaller function value than $x$. Under such a modification, our theoretical results would either be unchanged (in the convex case: since here we give guarantees in terms of the function value -- which can only get better this way), or would only need minor modifications (in the nonconvex case: here we  give guarantees on the norm of the gradient).} update $x$, halve\footnote{We have chosen, for simplicity of exposition, to present the algorithm and the analysis in the case when the stepsize is divided by 2 at each unsuccessful step. However, one can replace the constant 2 by any other constant larger than 1 and all the results of this paper hold with only minor and straightforward modifications.} the stepsize ($\alpha\leftarrow \alpha/2$) and repeat the process. The rationale behind halving the stepsize also at the beginning of the method will be described in Section~\ref{sec:init}.

Note that the method is  monotonic, i.e., at each successful step the function value decreases, while it stays the same  at unsuccessful steps.

\subsection{The method: analysis-friendly notation} \label{sec:DSO-method-analysis}

As described above, the method is conceptually very simple. However, for the sake of analysis, it will be useful to establish  notation in which we give a special label, $\{x_k\}$, to the unsuccessful iterates $x$, i.e., to points $x$ for which no point of the form $x + \alpha d$, $d\in D$, where $\alpha$ is the current stepsize, leads to  sufficient decrease. The reason for this is the following:  we will prove quality-of-solution guarantees  for the points $x_k$ (in particular, we shall show that $\|\nabla f(x_k)\|=O(1/2^k)$), whereas the number of successful points will determine the associated cost (i.e., number of function evaluations) of achieving this guarantee. 

With this in mind, Algorithm~\ref{alg:dfo-minimal} can be rewritten into an analysis-friendly form, obtaining Algorithm~\ref{alg:dfo}.
For convenience, let us now describe the method again, using the new notation.

\begin{algorithm}
\vspace{0.5em}
\begin{enumerate}
\item INPUT: starting point $x_0\in \R^n$; stepsize $\alpha_0 > 0$; forcing constant $c>0$; finite set $D\subset \R^n$
\item For $k\geq 1$ repeat
\begin{itemize}
\item Set $x_k^0 = x_{k-1}$ and $\alpha_k = \tfrac{1}{2} \alpha_{k-1}$

\item Let $x_k^0, \dots, x_k^{l_k}$ be generated by $$ x_{k}^{l+1} = x_{k}^l + \alpha_k d_{k}^l, \quad  d_k^l \in D, \quad l=0, \dots, l_k-1, $$ so that the following relations hold:
\begin{equation}
f(x_k^{l+1}) \leq f(x_k^l) - c \alpha_k^2, \quad l=0,\dots,l_{k}-1,
\label{eq:js8s}
\end{equation}
and 
\begin{equation}
f(x_k^{l_k} + \alpha_k d) > f(x_k^{l_k}) - c \alpha_k^2 \quad \text{for all} \quad d\in D.
\label{eq:js8s987987}
\end{equation}
\item Set $x_k = x_k^{l_k}$
\end{itemize}
\end{enumerate}
\caption{Simplified Direct Search (SDS): analysis-friendly form}
\label{alg:dfo}
\end{algorithm}

We start\footnote{In Section~\ref{sec:init} we show that it is possible to remove the dependence of the method on $\alpha_0$ or $c$; hence, SDS depends on a single parameter only: (the method still depends on $x_0$ and $D$, but these could simply be set to $x_0=0$ and $D=D_+=\{\pm e_i, \; i=1,2,\dots,n\}$). We prefer the slightly more general form which gives freedom to the user in choosing $x_0$ and $D$.} with an initial iterate $x_0\in \R^n$, an initial stepsize parameter $\alpha_0>0$ and a forcing constant $c>0$.  Given $x_{k-1}$ and $\alpha_{k-1}$, we seek to determine the next iterate $x_k$. This is done as follows. First, we initialize our search for $x_k$ by setting $x_{k}^0 = x_{k-1}$ and decrease the stepsize parameter: $\alpha_{k}=\tfrac{\alpha_{k-1}}{2}$.  Having done that, we try to  find $d\in D$ for which the following sufficient decrease condition holds: 
\[f(x_{k}^0 + \alpha_k d) \leq f(x_k^0) - c\alpha_k^2.\]
If such $d$ exists, we call it $d_k^0$,  declare the search step  {\em successful} and let $x_k^{1} = x_k^0 + \alpha_k d_k^0$. Note that the identification of $x_k^1$ requires, in the worst case, $|D|$ function evaluations (assuming $f(x_0)$ was already computed before). This process is repeated until we are no longer able to find a successful step; that is, until we find $x_k^{l_k}$ which  satisfies \eqref{eq:js8s987987}. Such  a point must exist if we assume that $f$ is bounded below. 

\begin{ass}[Boundedness of $f$] \label{ass:bound} $f$ is bounded below. That is, $f^*\eqdef \inf \{ f(x) \;:\; x\in \R^n\}>-\infty$.
\end{ass}

Indeed,  under Assumption~\ref{ass:bound} it is not possible to keep decreasing the function value by the positive constant $c\alpha_k^2$, and hence $l_k$ must be finite, and $x_k=x_k^{l_k}$ is well defined.  In particular, from \eqref{eq:js8s} we can see that
\[ f(x_k) = f(x_k^{l_k}) \leq f(x_k^{l_k-1}) - c\alpha_k^2 \leq f(x_k^0) - l_k c\alpha_k^2 = f(x_{k-1})-l_k c \alpha_k^2,\]
from which we obtain the following bound:
 \begin{equation}\label{eq:l_k}l_k \leq \frac{f(x_{k-1})-f(x_k)}{c \alpha_k^2} \leq \frac{f(x_0)-f^*}{c\alpha_k^2} = \frac{4^k(f(x_0)-f^*)}{c\alpha_0^2}.\end{equation}
This way, we produce the sequence 
\[x_k^0,x_k^1,\dots, x_k^{l_k},\]
 set $x_{k}=x_{k}^{l_k}$, and proceed to the next iteration. 

Note also that it is possible for $l_k$ to be equal to 0, in which case we have $x_k = x_{k-1}$. However, there is still progress, as the method has learned that the stepsize $\alpha_k$ does not lead to a successful step. 

We now summarize the elementary observations made above.

\begin{lem} \label{lem:firstobserv}Let Assumption~\ref{ass:bound} be satisfied. Then
\begin{itemize} 
\item[(i)] The counters $l_k$ are all finite, bounded as in \eqref{eq:l_k}, and hence the method produces an infinite sequence of iterates $\{x_k\}_{k\geq 0}$ with non-increasing function values: $f(x_{k+1})\leq f(x_k)$ for all $k\geq 0$.  
\item[(ii)] The total number of function evaluations up to iteration $k$ is bounded above by
\begin{equation}  \label{eq:js9jd8dd} N(k) \eqdef 1+ \sum_{j=1}^k |D|(l_j+1).\end{equation}
\item[(iii)] For all $k\geq 1$ we have
\begin{equation}\label{eq:s98yushus}f(x_k+ \alpha_k d) > f(x_{k}) - c \alpha_k^2 \qquad \text{for all} \qquad d\in D.\end{equation}
\end{itemize}
\end{lem}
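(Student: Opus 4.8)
The plan is to derive all three items from a single mechanism—telescoping the per-step sufficient-decrease inequality \eqref{eq:js8s} and invoking the lower bound $f^*$ supplied by Assumption~\ref{ass:bound}—together with a straightforward tally of function evaluations for part (ii). None of the three claims is genuinely hard; they are exactly the ``elementary observations'' already extracted in the surrounding text, so the work is organizational. The step requiring the most care is the bookkeeping in part (ii), where one must avoid double-counting the value of $f$ at the poll center.

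For part (i), I would chain the $l_k$ successful-step inequalities \eqref{eq:js8s} across a single iteration $k$. Since each accepted step lowers the value by at least $c\alpha_k^2$ and $x_k^0 = x_{k-1}$, summing over $l = 0,\dots,l_k-1$ gives $f(x_k) \leq f(x_{k-1}) - l_k c\alpha_k^2$. Boundedness then yields $l_k c\alpha_k^2 \leq f(x_{k-1}) - f(x_k) \leq f(x_0) - f^* < \infty$, which forces each $l_k$ to be finite (one cannot decrease $f$ by a fixed positive amount indefinitely) and, after substituting $\alpha_k = \alpha_0/2^k$, reproduces \eqref{eq:l_k}. Monotonicity $f(x_{k+1}) \leq f(x_k)$ is then immediate from the same chained inequality, with equality precisely when $l_{k+1} = 0$, i.e.\ when $x_{k+1} = x_k$. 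Finiteness of every $l_k$ is what guarantees each $x_k$ is well defined, hence that $\{x_k\}_{k\geq 0}$ is an infinite sequence.

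For part (ii), I would count evaluations iteration by iteration. Within iteration $k$, locating each of the $l_k$ successful directions costs at most $|D|$ evaluations (in the worst case one scans all of $D$ before a direction passes the sufficient-decrease test), and certifying the terminal unsuccessful step \eqref{eq:js8s987987} costs exactly $|D|$ evaluations, since one must verify that \emph{all} $d\in D$ fail. Because the value of $f$ at each poll center is already available—either as $f(x_0)$, carried over between iterations, or as the value computed when the previous step was accepted—iteration $k$ uses at most $|D|(l_k+1)$ \emph{new} evaluations. Adding the single evaluation of $f(x_0)$ incurred before the loop and summing over $j=1,\dots,k$ gives exactly the bound \eqref{eq:js9jd8dd}.

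Part (iii) requires no new argument: by construction $x_k = x_k^{l_k}$, so the termination condition \eqref{eq:js8s987987} \emph{is} the asserted inequality \eqref{eq:s98yushus} once $x_k^{l_k}$ is rewritten as $x_k$ on both sides. I would simply make this substitution and note that it holds for all $d\in D$.
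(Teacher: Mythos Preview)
Your proposal is correct and follows essentially the same approach as the paper. The paper's proof is even terser---it simply points back to the telescoping computation already carried out in the text preceding the lemma for part (i), gives the same per-iteration tally of at most $|D|(l_j+1)$ new evaluations plus the initial $f(x_0)$ for part (ii), and notes that part (iii) is immediate from \eqref{eq:js8s987987}; your write-up makes the same observations explicit, with the added (and correct) remark that the poll-center value carries over so is never double-counted.
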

\begin{proof}
We have established part (i) in the preceding text. The leading ``1'' in \eqref{eq:js9jd8dd} is for the evaluation of $f(x_0)$. Having computed $f(x_{j-1})$,  the method needs to perform at most $|D| (l_j +1)$ function evaluations  to identify $x_j$: up to $|D|$ evaluations to identify each of the points $x_j^{1}, \dots, x_j^{l_j}$ and further $|D|$ evaluations to verify that \eqref{eq:js8s987987} holds. It remains to add these up; which gives (ii). Part (iii) follows trivially by inspecting  \eqref{eq:js8s987987}.
 \end{proof}

%
%

\section{Initialization}\label{sec:init}

It will be convenient  to assume---without loss of generality as we shall see---that \eqref{eq:s98yushus} holds for $k=0$ also. In fact, this is the reason for halving the initial stepsize at the {\em beginning} of the iterative process. Let us formalize this as an assumption:

\begin{ass}[Initialization] \label{ass:init} The triple $(x_0,\alpha_0,c)$ satisfies the following relation:
\begin{equation}
\label{eq:hss8jss} f(x_0 + \alpha_0 d) > f(x_0) - c \alpha_0^2 
\qquad  \text{for all} \qquad  d \in D. \end{equation}
\end{ass}

In this section we describe three ways of identifying a triple $(x_0,\alpha_0,c)$ for which Assumption~\ref{ass:init} is satisfied; some more efficient than others. 
\begin{itemize}
\item {\em Bootstrapping initialization.} Finds a suitable starting point $x_0$.
\item {\em Stepsize initialization.} Finds a ``large enough'' stepsize $\alpha_0$.
\item {\em Forcing constant initialization.} Finds a ``large enough'' forcing constant $c$.
\end{itemize}

 We will now show that initialization can be performed very efficiently (in particular, we prefer {\em stepsize initialization} and forcing constant initialization to bootstrapping), and hence Assumption~\ref{ass:init} does not pose any practical issues. In fact, we found that stepsize initialization has the capacity to dramatically improve the practical performance of SDS, especially when compared to the bootstrapping strategy --- which is the strategy {\em implicitly} employed by  direct search methods in the literature \cite{vicentenonsmooth, vicente}. Indeed, bootstrapping initialization is equivalent to running SDS without any initialization whatsoever.

\subsection{Bootstrapping initialization}
 
A natural way to initialize---although  this may be very inefficient in both theory and practice---is to simply run the direct search method itself (that is,  Algorithm~\ref{alg:dfo-minimal}; without halving the stepsize), using any triple $(\tilde{x}_0,\alpha_0,c)$, and stop as soon as the first unsuccessful iterate $x$ is found.  We can then set $x_0\leftarrow x$ and keep $\alpha_0$ and $c$. This method is formally described in Algorithm~\ref{alg:init-bootstrap}.

\begin{algorithm}[!h]
\vspace{0.5em}
\begin{enumerate}
\item INPUT: $\tilde{x}_0 \in \R^n$; stepsize $\alpha_0 > 0$; forcing constant $c>0$; $D\subset \R^n$
\item $x\leftarrow \tilde{x}_0$
\item {\bf while} there exists $d\in D$ such that $f(x+\alpha_0 d) \leq f(x)-c\alpha_0^2$
\begin{itemize}
\item  set $x \leftarrow x+ \alpha_0d$
\end{itemize}

\item  OUTPUT: $x_0 = x$, $\alpha_0 = \alpha_0$ and $c=c$
\end{enumerate}
\caption{Bootstrapping initialization (finding suitable $x_0$)}
\label{alg:init-bootstrap}
\end{algorithm}

\begin{lem} Let Assumption~\ref{ass:bound} ($f$ is bounded below) be satisfied. Then Algorithm~\ref{alg:init-bootstrap} outputs triple  $(x_0,\alpha_0,c)$ satisfying Assumption~\ref{ass:init}. Its complexity is
\begin{equation} \label{eq:iuhisuhsi} I_{x_0} \eqdef  |D| \left( \frac{f(\tilde{x}_0)-f^*}{c\alpha_0^2} +1\right) \end{equation}
function evaluations (not counting the evaluation of $f(x_0)$). .
\end{lem}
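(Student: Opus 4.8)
The plan is to verify three things about Algorithm~\ref{alg:init-bootstrap} in turn: that it terminates, that its output satisfies Assumption~\ref{ass:init}, and that it performs at most $I_{x_0}$ function evaluations. The first two follow immediately from the sufficient decrease condition together with boundedness below; the only point that will require care is the exact evaluation count.

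First I would bound the number $T$ of executions of the \textbf{while} loop body. Writing $\tilde x_0 = y_0, y_1, \dots, y_T = x_0$ for the successive values taken by $x$, the loop condition guarantees $f(y_{t+1}) \le f(y_t) - c\alpha_0^2$ for each accepted step, so telescoping yields $f(x_0) \le f(\tilde x_0) - T c\alpha_0^2$. Together with $f \ge f^*$ (Assumption~\ref{ass:bound}) and $c\alpha_0^2 > 0$, this forces
\[ T \le \frac{f(\tilde x_0) - f^*}{c\alpha_0^2} < \infty, \]
so the loop terminates. At termination its condition fails, i.e.\ no $d \in D$ satisfies $f(x_0 + \alpha_0 d) \le f(x_0) - c\alpha_0^2$, which rearranges to $f(x_0 + \alpha_0 d) > f(x_0) - c\alpha_0^2$ for every $d \in D$; this is precisely \eqref{eq:hss8jss}, so $(x_0, \alpha_0, c)$ satisfies Assumption~\ref{ass:init}.

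It then remains to count evaluations, and this is the step I expect to be the only real obstacle --- not because it is deep, but because it is easy to get the constant wrong. Each test of the loop condition at the current point $x$ costs at most $|D|$ evaluations $f(x + \alpha_0 d)$, $d \in D$, whereas $f(x)$ is never recomputed: it equals $f(\tilde x_0)$ at the start and, after every successful step, equals the trial value just accepted. There are $T$ successful tests and one final failing test, hence $T+1$ rounds and at most $|D|(T+1)$ trial evaluations. The only other evaluation is the initial $f(\tilde x_0)$; but $f(x_0)$ is already among the values counted (a trial value when $T \ge 1$, or $f(\tilde x_0)$ when $T = 0$), so excluding it as the statement does exactly offsets this leftover term. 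Hence, not counting $f(x_0)$, the total is at most $|D|(T+1)$, and substituting the bound on $T$ gives
\[ |D|(T+1) \le |D|\left( \frac{f(\tilde x_0) - f^*}{c\alpha_0^2} + 1 \right) = I_{x_0}, \]
as required. The care needed is only in not recharging $f(x)$ across consecutive rounds and in reconciling the single leftover $f(\tilde x_0)$ with the excluded $f(x_0)$.
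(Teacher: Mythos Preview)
Your proof is correct and follows essentially the same approach as the paper's: bound the number of successful updates via the total available decrease $f(\tilde x_0)-f^*$, then charge at most $|D|$ evaluations per round (including the final failing one). Your accounting for $f(\tilde x_0)$ versus the excluded $f(x_0)$ is more explicit than the paper's, but the argument is the same.
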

\begin{proof}
Starting from $f(\tilde{x}_0)$, the function value cannot decrease by more than $f(\tilde{x}_0)-f^*$, hence, $x$ is updated at most $(f(\tilde{x}_0)-f^*)/(c\alpha_0^2)$ times. During each such decrease at most $|D|$ function evaluations are made. There is an additional $|D|$ term to account for checking at the end that \eqref{eq:hss8jss} holds. [Note that we are using the same logic which led to bound \eqref{eq:l_k}.]
\end{proof}

\subsection{Stepsize initialization}
 
We now propose a  much more efficient initialization procedure (Algorithm~\ref{alg:alpha0}).  Starting with $(x_0, \tilde{\alpha}_0,c)$,  the procedure finds large enough  $\alpha_0$  (in particular, $\alpha_0\geq \tilde{\alpha}_0$), so that the triple $(x_0,\alpha_0,c)$ satisfies Assumption~\ref{ass:init} -- if $f$ is {\em convex}.    It turns out, as we shall see, that the initialization step is not needed in the nonconvex case, and hence this is sufficient. At the same time, we would like to be able to use this initialization algorithm also in the nonconvex case -- simply because in derivative-free optimization we may not {\em know} whether the function we are minimizing is or is not convex!  So, in this case we would like to be  at least able to say that the initialization algorithm stops after a certain (small) number of function evaluations, and be able to say something about how large $\alpha_0$ is.

\begin{algorithm}
\begin{enumerate}
\vspace{0.5em}
\item INPUT: $x_0\in \R^n$; stepsize $\tilde{\alpha}_0>0$;   forcing constant $c>0$; $D = \{d_1, d_2, \dots, d_p\}$
\item $i \leftarrow 1$ and $\alpha \leftarrow \tilde{\alpha}$
\item \textbf{while} $i \leq |D|$
\begin{itemize}
  \item \textbf{if} $f(x_0 + \alpha d_i) \leq f(x_0) - c \alpha^2$ \textbf{then} set $\alpha \leftarrow 2 \alpha$
\item \textbf{else}  $i \leftarrow i+1$
\end{itemize}
\item OUTPUT: $\alpha_0 = \alpha$
\end{enumerate}
\caption{Stepsize initialization (finding large enough $\alpha_0$)}
\label{alg:alpha0}
\end{algorithm}

The following result describes the behaviour of the stepsize initialization method.

\begin{lem}\label{thm:init} Let Assumption~\ref{ass:bound} ($f$ is bounded below) be satisfied. 
\begin{enumerate}
\item[(i)] Algorithm~\ref{alg:alpha0} outputs $\alpha_0$ satisfying 
\[1 \leq \frac{\alpha_0}{\tilde{\alpha}_0} \leq \max \left\{1, 2\sqrt{\frac{f(x_0)-f^*}{c\tilde{\alpha}_0^2}} \right\} \eqdef M,\]
and performs in total at most
\begin{equation}\label{eq:jsuhujns}  I_{\alpha_0} \eqdef  |D| + \log_2 M  =|D| + \max\left \{0, 1 + \frac{1}{2}\log_2 \left(\frac{f(x_0)-f^*}{c\tilde{\alpha}_0^2}\right) \right\} \end{equation}
function evaluations (not counting the evaluation of $f(x_0)$). 
\item[(ii)] If $f$ is convex, then the triple $(x_0,\alpha_0,c)$ satisfies Assumption \ref{ass:init}.
\end{enumerate}
\end{lem}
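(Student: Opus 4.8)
The plan is to prove parts (i) and (ii) separately; part (i) is pure bookkeeping combined with Assumption~\ref{ass:bound}, while the substance lies in part (ii), where convexity is essential. For part (i), I would first observe that each pass through the while loop performs exactly one new function evaluation, namely $f(x_0 + \alpha d_i)$ (the value $f(x_0)$ is computed once and reused, and is not counted), after which either $\alpha$ is doubled or the index $i$ is incremented. Hence the total number of evaluations equals the number of doublings plus the number of increments. The index $i$ runs from $1$ to $|D|+1$, so there are exactly $|D|$ increments. Since $\alpha$ is only ever doubled and moves from $\tilde{\alpha}_0$ to $\alpha_0$, the number of doublings is precisely $\log_2(\alpha_0/\tilde{\alpha}_0)$.

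It remains to bound $\alpha_0/\tilde{\alpha}_0$, which simultaneously shows the loop terminates. A doubling is triggered only when $f(x_0 + \alpha d_i) \le f(x_0) - c\alpha^2$ at the current stepsize $\alpha$; combining this with $f^* \le f(x_0 + \alpha d_i)$ (Assumption~\ref{ass:bound}) gives $c\alpha^2 \le f(x_0) - f^*$, i.e. $\alpha \le \sqrt{(f(x_0)-f^*)/c}$ at every pre-doubling stepsize, so after doubling $\alpha \le 2\sqrt{(f(x_0)-f^*)/c}$. As $\alpha$ never decreases and starts at $\tilde{\alpha}_0$, the output obeys $\tilde{\alpha}_0 \le \alpha_0 \le \max\{\tilde{\alpha}_0,\, 2\sqrt{(f(x_0)-f^*)/c}\}$, which is exactly the claimed bound $1 \le \alpha_0/\tilde{\alpha}_0 \le M$ (and only finitely many doublings are possible, so the loop halts). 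Substituting $\log_2(\alpha_0/\tilde{\alpha}_0) \le \log_2 M$ into the evaluation count and simplifying $\log_2 M = \max\{0,\, 1 + \tfrac12 \log_2((f(x_0)-f^*)/(c\tilde{\alpha}_0^2))\}$ yields the stated $I_{\alpha_0}$.

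For part (ii), the key step is a one-dimensional monotonicity lemma. Fixing $d \in D$ and setting $g(\alpha) \eqdef f(x_0 + \alpha d)$, convexity of $f$ makes $g$ convex, so the secant slope $(g(\alpha) - g(0))/\alpha$ is non-decreasing in $\alpha > 0$. I would use this to show that once sufficient decrease fails for $d$ at some stepsize, it fails at every larger stepsize: if $f(x_0 + \alpha d) > f(x_0) - c\alpha^2$, i.e. $(g(\alpha)-g(0))/\alpha > -c\alpha$, then for any $\alpha' \ge \alpha$ the monotonicity gives $(g(\alpha')-g(0))/\alpha' \ge (g(\alpha)-g(0))/\alpha > -c\alpha \ge -c\alpha'$, whence $f(x_0 + \alpha' d) > f(x_0) - c\alpha'^2$.

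Finally, I would apply this lemma direction by direction. The algorithm increments past $d_i$ precisely at the stepsize $\beta_i$ at which $f(x_0 + \beta_i d_i) > f(x_0) - c\beta_i^2$, and since $\alpha$ is non-decreasing, $\beta_i \le \alpha_0$. The monotonicity lemma then transports this failure up to $\alpha_0$, giving $f(x_0 + \alpha_0 d_i) > f(x_0) - c\alpha_0^2$; as this holds for every $i = 1, \dots, |D|$, the triple $(x_0, \alpha_0, c)$ satisfies \eqref{eq:hss8jss}, i.e. Assumption~\ref{ass:init}. The main obstacle is exactly this transport step: the algorithm only certifies failure of sufficient decrease for each $d_i$ at its own (smaller) stepsize $\beta_i$, and without convexity there is no reason these failures should persist at the common final stepsize $\alpha_0$ --- it is the secant-slope monotonicity of a convex restriction that closes this gap, and this is precisely why the conclusion of (ii) requires convexity.
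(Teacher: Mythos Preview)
Your proof is correct and follows essentially the same approach as the paper. Part (i) matches the paper's argument exactly (each loop iteration either doubles $\alpha$ or increments $i$, with the bound on doublings coming from $c\alpha^2 \le f(x_0)-f^*$), and your part (ii) secant-slope monotonicity argument is precisely the paper's convexity inequality $\tfrac{f(x_0+\alpha_0 d_i)-f(x_0)}{\alpha_0} \ge \tfrac{f(x_0+\alpha_i d_i)-f(x_0)}{\alpha_i}$, just packaged as a named lemma rather than written out inline.
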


\begin{proof}
(i) If  at some point during the execution of the algorithm we have $\alpha > \sqrt{(f(x_0)-f^*)/c} \eqdef h$, then the ``if'' condition cannot be satisfied, and hence $\alpha$ will not be further doubled. So, if $\tilde{\alpha} \leq h$, then   $\alpha \leq 2h$ for all $\alpha$ generated throughout the algorithm, and if $\tilde{\alpha} > h$, then $\alpha=\tilde{\alpha}$ throughout the algorithm. Consequently, $\alpha\leq \max\{\tilde{\alpha},2h\}$ throughout the algorithm.
 
Now note that at each step of the method, either $\alpha$ is doubled, or $i$ is increased by one. Since, as we have just shown, $\alpha$ remains bounded, and $D$ is finite, the algorithm will stop. Moreover, the method performs function evaluation of the form $f(x_0+\alpha d_i)$, where $\alpha$ can assume at most $1+\log_2 M$ different values and $d_i$ at most $|D|$ different values, in a fixed order. Hence, the method performs at most $ |D| + \log_2 M$ function evaluations (not counting $f(x_0)$).

(ii)  Note that for each $d_i \in D$ there exists $\alpha_i \leq \alpha_0$ for which \begin{equation}\label{eq:js8ndgdd}f(x_0 + \alpha_i d_i) > f(x_0) - c \alpha_i^2.\end{equation} Indeed, this holds for $\alpha_i$ equal to the value of $\alpha$ at the moment when the index $i$ is increased. We now claim that, necessarily,  
inequality \eqref{eq:js8ndgdd} must hold with $\alpha_i$ replaced by $\alpha_0$. We shall show that this follows from convexity. Indeed, by convexity,
\[\frac{f(x_0+\alpha_0d_i)-f(x_0)}{\alpha_0} \geq \frac{f(x_0+\alpha_i d_i )- f(x_0)}{\alpha_i},\]
which implies that 
$$ f(x_0 + \alpha_0 d_i) \geq f(x_0) + \left( f(x_0 + \alpha_i d_i) - f(x_0) \right) \frac{\alpha_0}{\alpha_i} > f(x_0) - c \alpha_i^2 \frac{\alpha_0}{\alpha_i} \geq f(x_0) - c \alpha_0^2. $$
We have now established \eqref{eq:hss8jss}, and hence the second statement of the theorem is proved.

\end{proof}

The runtime of the stepsize initialization method (Algorithm~\ref{alg:alpha0}), given by \eqref{eq:jsuhujns}, is negligible compared to the runtime of the bootstrapping initialization method (Algorithm~\ref{alg:init-bootstrap}). 


%

\subsection{Forcing constant initialization} 

In Algorithm~\ref{alg:init-forcing} we  set $c$ to a large enough value, given  $x_0$ and $\alpha_0$.
 
\begin{algorithm}[!h]
\vspace{0.5em}
\begin{enumerate}
\item INPUT: $x_0 \in \R^n$; stepsize $\alpha_0 > 0$;  $D\subset \R^n$
\item OUTPUT: $x_0=x_0$, $\alpha_0=\alpha_0$ and $c = 1+ \max\left\{0, \tfrac{f(x_0)-\min_{d\in D} f(x_0 + \alpha_0 d)}{\alpha_0^2}\right\}$
\end{enumerate}
\caption{Forcing constant initialization (finding suitable $c$)}
\label{alg:init-forcing}
\end{algorithm}

\begin{lem}  Algorithm~\ref{alg:init-forcing} outputs triple  $(x_0,\alpha_0,c)$ satisfying Assumption~\ref{ass:init}. Its complexity is
\begin{equation} \label{eq:iuhisuhsi} I_{c} \eqdef |D| \end{equation}
function evaluations (not counting the evaluation of $f(x_0)$). 
\end{lem}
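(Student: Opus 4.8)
The plan is to treat the two claims separately, starting with the trivial complexity count and then verifying that the prescribed $c$ forces Assumption~\ref{ass:init}. For the complexity, I would observe that the only nontrivial quantity Algorithm~\ref{alg:init-forcing} computes is $\min_{d\in D} f(x_0+\alpha_0 d)$. Evaluating this minimum requires computing $f(x_0+\alpha_0 d)$ once for each of the $|D|$ directions $d\in D$; no other function values are needed. Since the value $f(x_0)$ is, by convention, already available and not counted, the total is exactly $I_c = |D|$, establishing the complexity bound.

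For the correctness claim, I would first rewrite the target inequality \eqref{eq:hss8jss} in a form that isolates $c$. Since $\alpha_0>0$, for a fixed direction $d$ the inequality $f(x_0+\alpha_0 d) > f(x_0) - c\alpha_0^2$ is equivalent to $c > (f(x_0) - f(x_0+\alpha_0 d))/\alpha_0^2$. Requiring this simultaneously for all $d\in D$ is the same as requiring $c$ to strictly exceed the maximum over $d\in D$ of the right-hand side. Because maximizing $-f(x_0+\alpha_0 d)$ is the same as minimizing $f(x_0+\alpha_0 d)$, that maximum equals $(f(x_0) - \min_{d\in D} f(x_0+\alpha_0 d))/\alpha_0^2$. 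Thus Assumption~\ref{ass:init} holds if and only if $c$ strictly exceeds this single number.

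To finish, I would set $M \eqdef (f(x_0) - \min_{d\in D} f(x_0+\alpha_0 d))/\alpha_0^2$, so that the algorithm's output is precisely $c = 1 + \max\{0, M\}$. Regardless of the sign of $M$ we then have $c \geq 1 + M > M$, which is exactly the strict inequality derived above, so Assumption~\ref{ass:init} is satisfied. There is no real obstacle here; the only point requiring any care is \emph{strictness} (the assumption uses $>$, not $\geq$), and this is exactly what the additive $1$ is designed to deliver. The $\max\{0,\cdot\}$ serves the complementary role of guaranteeing $c \geq 1 > 0$, so that the output is a legitimate positive forcing constant even in the degenerate case where every direction strictly increases $f$ (i.e. when $M<0$).
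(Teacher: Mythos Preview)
Your proof is correct and follows the same approach as the paper's own proof, which is essentially a one-liner: ``By construction, $c$ is positive and $c\alpha_0^2 > f(x_0) - f(x_0+\alpha_0 d)$ for all $d\in D$; the method needs to evaluate $f(x_0+\alpha_0 d)$ for all $d\in D$.'' You simply unpack this in more detail, making explicit the roles of the additive $1$ (strictness) and the $\max\{0,\cdot\}$ (positivity of $c$).
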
 
\begin{proof}
By construction,  $c$ is positive and $c\alpha_0^2 > f(x_0) - f(x_0 + \alpha_0 d)$ for all $d\in D$. The method needs to evaluate $f(x_0+\alpha_0 d)$ for all $d\in D$.
\end{proof}

\section{Positive spanning sets and their cosine measure} \label{sec:directions}

Clearly, the set of directions, $D$, needs to be rich enough so that every point in $\R^n$ (in particular, the optimal point) is potentially reachable by a sequence of steps of SDS. In particular, we will assume that $D$ is a {\em positive spanning set}; that is, the conic hull of $D$ is $\R^n$: \[\R^n = \left\{\sum_{i} t_i d_i, \; d_i\in D,\; t_i\geq 0 \right\}.\]
Proposition~\ref{prop:D} lists several equivalent characterizations of a positive spanning set. We do not need the result to prove our complexity bounds; we include it for the benefit of the reader.A proof sketch can be found in the appendix.

\begin{prop}\label{prop:D} Let $D$ be a finite set of nonzero vectors. The following statements are equivalent:
\begin{itemize}
\item[(i)] $D$ is a positive spanning set.
\item[(ii)] The cosine measure of $D$, defined below\footnote{Note that the continuous function $v\mapsto \max_{d\in D} \ve{v}{d}/\|d\|$ attains its minimizer on the compact set $\{v\;:\; \|v\|=1\}$. Hence, it is justified to write minimum in \eqref{eq:hjs787gasas} instead of infimum.}, is positive:
\begin{equation}\label{eq:hjs787gasas} \mu(D) = \mu \eqdef \min_{0\neq v\in \R^n} \max_{d\in D} \frac{\ve{v}{d}}{\|v\|\|d\|} > 0.\end{equation}
Above, $\ve{\cdot}{\cdot}$ is the standard Euclidean inner product and $\|\cdot\|$ is the standard Euclidean norm. 
\item[(iii)] The convex hull of $D$ contains $0$ in its interior\footnote{It is clear from the proof provided in the appendix that if (ii) holds, then the convex hull of $\{d/\|d\|\;\:\; d\in D\}$ contains the ball centered at the origin  of radius $\mu$.}.
\end{itemize}
\end{prop}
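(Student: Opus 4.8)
The plan is to prove the three equivalences by establishing the cyclic chain (i) $\Rightarrow$ (ii) $\Rightarrow$ (iii) $\Rightarrow$ (i). The advantage of going around in this particular order is that the only genuinely nontrivial external input --- the supporting/separating hyperplane theorem --- is needed in a single place, and the notoriously delicate issue of the closedness of a finitely generated cone is sidestepped altogether. Throughout I write $\mathrm{cone}(D)=\{\sum_i t_i d_i : t_i\geq 0\}$ for the conic hull and $\mathrm{conv}(D)$ for the (compact) convex hull of the finite set $D$.

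For (i) $\Rightarrow$ (ii) I would use a short trick that avoids any separation argument. Fix $v\neq 0$. Since $\mathrm{cone}(D)=\R^n$, the vector $v$ itself lies in the cone, so $v=\sum_i t_i d_i$ with $t_i\geq 0$. Taking the inner product with $v$ gives
\[\|v\|^2=\ve{v}{v}=\sum_i t_i\,\ve{v}{d_i}>0,\]
and since all $t_i\geq 0$, at least one summand must be positive, forcing $\ve{v}{d_i}>0$ for some $i$; hence $\max_{d\in D}\ve{v}{d}/(\|v\|\|d\|)>0$ for every $v\neq 0$. Because the map $v\mapsto \max_{d\in D}\ve{v}{d}/\|d\|$ is continuous and the unit sphere is compact (this is exactly the content of the footnote attached to the definition of $\mu$), the minimum defining $\mu$ is attained, and is therefore strictly positive.

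For (ii) $\Rightarrow$ (iii) I would argue by contraposition, and this is where I expect the main (and essentially only) obstacle to lie, since it is the one step invoking a separation theorem and requires care at the boundary. Suppose $0\notin\mathrm{int}\,\mathrm{conv}(D)$. As $\mathrm{conv}(D)$ is convex and $0$ is not an interior point, the supporting/separating hyperplane theorem yields a nonzero $v$ placing the whole set in a closed halfspace through the origin, i.e. $\ve{v}{y}\leq 0$ for all $y\in\mathrm{conv}(D)$; in particular $\ve{v}{d}\leq 0$ for every $d\in D$. Then $\max_{d\in D}\ve{v}{d}\leq 0$, so the value of the inner maximization at this $v$ is nonpositive and $\mu\leq 0$, contradicting (ii). The care needed here is only in noting that the same ``$\leq$'' conclusion covers both the case $0\notin\mathrm{conv}(D)$ (strict separation of a point from a compact convex set) and the case $0\in\partial\,\mathrm{conv}(D)$ (a genuine supporting hyperplane).

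Finally, (iii) $\Rightarrow$ (i) is elementary and purely by scaling. If $0\in\mathrm{int}\,\mathrm{conv}(D)$, choose $r>0$ with the open ball $B(0,r)\subseteq\mathrm{conv}(D)\subseteq\mathrm{cone}(D)$. Given any $x\neq 0$, the vector $\tfrac{r}{2\|x\|}x$ lies in $B(0,r)$, hence in $\mathrm{cone}(D)$, and multiplying it by the positive scalar $\tfrac{2\|x\|}{r}$ (cones are closed under nonnegative scaling) shows $x\in\mathrm{cone}(D)$; thus $\mathrm{cone}(D)=\R^n$, which is (i). I would also remark that the quantitative sharpening in the footnote to (iii) --- that $\mathrm{conv}\{d/\|d\| : d\in D\}$ contains the ball of radius $\mu$ when (ii) holds --- follows by making the inequalities in this last implication explicit, although it is not needed for the stated equivalence.
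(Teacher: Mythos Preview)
Your cyclic argument (i)$\Rightarrow$(ii)$\Rightarrow$(iii)$\Rightarrow$(i) is correct and in fact more detailed than the paper's own sketch. The paper organizes the proof differently: it dispatches (i)$\Leftrightarrow$(ii) in one line (``can be shown by a separation argument''), proves (ii)$\Rightarrow$(iii) \emph{directly} via support functions---observing that $\sigma_{\mathrm{conv}(D')}(v)=\max_{d\in D}\ve{v}{d}/\|d\|\geq \mu\|v\|=\sigma_{\mu\mathcal B}(v)$ with $D'=\{d/\|d\|:d\in D\}$, whence $\mathrm{conv}(D')\supset \mu\mathcal B$---and then closes with (iii)$\Rightarrow$(i) by exactly the scaling argument you give. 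Your trick for (i)$\Rightarrow$(ii), writing $v=\sum_i t_i d_i$ and pairing with $v$, is more elementary than the separation the paper alludes to and, as you note, avoids any closedness issues for the cone. Conversely, the paper's direct support-function route for (ii)$\Rightarrow$(iii) is what delivers the quantitative footnote (the ball of radius $\mu$ inside $\mathrm{conv}(D')$) essentially for free; your contrapositive separation argument establishes the equivalence cleanly but does not immediately yield that explicit radius, so your closing remark that the footnote ``follows by making the inequalities in this last implication explicit'' is misplaced---it pertains to (ii)$\Rightarrow$(iii), not to (iii)$\Rightarrow$(i), and would require a separate quantitative separation step rather than a mere unpacking of what you wrote.
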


This is formalized in the following assumption.

\begin{ass}[Positive spanning set]  \label{ass:D} $D$ is a finite set of unit-norm in $\R^n$ forming a positive spanning set. 
\end{ass}

Note that we  assume that all vectors in $D$ are of unit length. While the algorithm and theory can be extended in a straightforward way to allow for vectors of different lengths (which, in fact, is standard in the literature), this does not lead to an improvement in the complexity bounds and merely makes the analysis and results a bit less transparent. Hence, the unit length assumption is enforced for convenience.

%

 This assumption is standard in the literature on direct search. Indeed, it is clearly  necessary  as otherwise it is not possible to guarantee that any point (and, in particular, the optimal point) can be reached by a sequence of steps of the algorithm.

The cosine measure $\mu$ has  a straightforward geometric interpretation: for each nonzero vector $v$, let $d\in D$ be the vector forming the smallest angle with $v$ and let $\mu(v)$ be the cosine of this angle. Then $\mu = \min_v \mu(v)$. That is, for every nonzero vector $v$ there exists $d\in D$ such that the cosine of the angle between these two vectors is at least $\mu>0$ (i.e., the angle is acute). In the analysis, we shall consider the vector $v$ to be the negative gradient of $f$ at the current point. While this gradient is unknown, we know that there is a direction in $D$ which approximates it well, with the size of $\mu$ being a measure of the quality of that approximation: the larger $\mu$ is, the better.  

Equivalently, $\mu$ can be seen as the largest scalar such that for all nonzero $v$ there exists $d\in D$ so that the following inequality holds:
\begin{equation}\label{eq:js8js8s}\mu \|v\|\|d\| \leq \ve{v}{d}.\end{equation}
This is a reverse of the Cauchy-Schwarz inequality, and hence, necessarily, $\mu \leq 1$. However, for $\mu=1$ to hold we would need $D$ to be dense on the unit sphere. For better insight, consider the following example. If $D$ is chosen to be the ``maximal positive basis" (composed of the coordinate vectors together with their negatives: $D = \{ \pm e_i \ |\ i = 1, \dots, n \}$), then 
\begin{equation}
\label{eq:muexample}
\mu = \frac{1}{\sqrt{n}}.
\end{equation}

\section{Complexity analysis}\label{sec:main_results}

In this section we state and prove our main results: three complexity theorems covering the nonconvex, convex and strongly convex case. We also provide a brief discussion.

In all results of this section we will make the following assumption.

\begin{ass}[$L$-smoothness of $f$] \label{ass:smooth} $f$ is $L$-smooth. That is, $f$ has a Lipschitz continuous gradient, with a positive Lipschitz constant $L$: 
\begin{equation}
\label{eq:Lipschitz}
\| \nabla f(x) - \nabla f(y) \| \leq L \| x - y \| \quad \text{for all} \quad x,y\in \R^n.
\end{equation}
\end{ass}

\subsection{Key lemma}\label{ass:lemmas}

In this section we establish a key result that will drive the analysis.
The result is standard in the analysis of direct search methods \cite{scheinbergBook}, although we only need it in a simplified form\footnote{Lemma~\ref{lem1} is usually stated in a setting with the vectors in $D$ allowed to be of arbitrary lengths, and with $c\alpha^2$ replaced by an arbitrary {\em forcing function} $\rho(\alpha)$. In this paper we chose to present the result with $\rho(\alpha)=c\alpha^2$ since i) the complexity guarantees do not improve by considering a different forcing function, and because ii) the results and proofs become a bit less transparent. For a general forcing function, the statement would say that if $f(x) - \rho(\alpha) < f(x + \alpha d)$ for all $d \in D$, then
\[\|\nabla f(x)\| \leq \mu^{-1} \left(\tfrac{L}{2} \alpha d_{max} + \tfrac{\rho(\alpha)}{\alpha } \tfrac{1}{d_{min}} \right),\]
where $d_{min} = \min \{\|d\| \;:\; d \in D\}$ and $d_{max}=\max \{\|d\|\;:\; d\in D\}$. In this general form the lemma is presented, for instance, in  \cite{scheinbergBook}.}.
Moreover, we will use it in a  novel way, which leads to a significant simplification (especially in the convex cases) and unification of the analysis, and to sharper and cleaner complexity bounds. We include the proof for completeness.


\begin{lem}
\label{lem1} Let Assumption \ref{ass:smooth} ($f$ is $L$-smooth) and Assumption \ref{ass:D} ($D$ is a positive spanning set) be satisfied and let $x \in \R^n$ and $\alpha > 0$. If $f(x) - c \alpha^2 < f(x + \alpha d)$ for all $d \in D$, then
\begin{equation}
\label{eq:lem1}
\| \nabla f(x) \| \leq \frac{1}{\mu} \left(\frac{L}{2} + c \right) \alpha.
\end{equation}

\end{lem}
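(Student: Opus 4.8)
The plan is to combine two ingredients: the quadratic upper bound implied by $L$-smoothness, and the reverse Cauchy--Schwarz inequality \eqref{eq:js8js8s} furnished by the cosine measure. First I would invoke the descent lemma. Assumption~\ref{ass:smooth} gives, for every (unit-norm) direction $d\in D$,
\[ f(x+\alpha d) \leq f(x) + \alpha \ve{\nabla f(x)}{d} + \tfrac{L}{2}\alpha^2, \]
where I have used $\|d\|=1$. This is the only place smoothness enters, and it can be obtained in one line by writing $f(x+\alpha d)-f(x)=\int_0^1 \ve{\nabla f(x+t\alpha d)}{\alpha d}\,dt$ and bounding the integrand using \eqref{eq:Lipschitz} and the Cauchy--Schwarz inequality.

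Next I would feed in the hypothesis that $x$ admits no successful step at stepsize $\alpha$, namely $f(x)-c\alpha^2 < f(x+\alpha d)$ for all $d\in D$. Chaining this with the displayed upper bound and cancelling the common term $f(x)$ yields $-c\alpha^2 < \alpha\ve{\nabla f(x)}{d} + \tfrac{L}{2}\alpha^2$; dividing by $\alpha>0$ and rearranging gives
\[ \ve{-\nabla f(x)}{d} < \left(\tfrac{L}{2}+c\right)\alpha \qquad \text{for all } d\in D. \]
The point to emphasize is that this one-sided bound on directional slopes holds \emph{uniformly} over all $d\in D$; in particular it holds for the direction that aligns best with $-\nabla f(x)$.

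Finally I would apply the cosine measure. If $\nabla f(x)=0$ the conclusion \eqref{eq:lem1} is immediate, since its right-hand side is nonnegative. Otherwise I set $v=-\nabla f(x)\neq 0$ in \eqref{eq:js8js8s}, which guarantees a direction $d\in D$ with $\mu\|\nabla f(x)\| = \mu\|v\|\|d\| \leq \ve{v}{d} = \ve{-\nabla f(x)}{d}$. Combining this with the bound above gives $\mu\|\nabla f(x)\| < \left(\tfrac{L}{2}+c\right)\alpha$, and dividing by $\mu>0$ (positive by Assumption~\ref{ass:D} via Proposition~\ref{prop:D}) yields \eqref{eq:lem1}. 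I do not expect a genuine obstacle here: the argument is essentially a three-line computation. The only subtlety worth flagging is the choice $v=-\nabla f(x)$ rather than $+\nabla f(x)$, which is exactly what converts the uniform bound on directional slopes into a bound on the gradient norm, together with the harmless degenerate case $\nabla f(x)=0$.
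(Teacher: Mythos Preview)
Your proof is correct and follows essentially the same approach as the paper: combine the descent lemma from $L$-smoothness with the hypothesis to bound $\ve{-\nabla f(x)}{d}$ uniformly over $D$, then invoke the cosine measure \eqref{eq:js8js8s} with $v=-\nabla f(x)$ to convert this into a bound on $\|\nabla f(x)\|$. The only cosmetic differences are that you divide by $\alpha$ earlier and treat the case $\nabla f(x)=0$ explicitly, whereas the paper keeps the factor of $\alpha$ and adds the two inequalities directly.
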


\begin{proof}
Since $f$ is $L$-smooth, \eqref{eq:Lipschitz} implies that for all $x, y \in \R^n$ we have $ f(y) \leq f(x) + \< \nabla f(x), y - x > + \frac{L}{2} \| y - x \|^2$. By assumption, we know that for all $d\in D$, we have $- f(x + \alpha d) < - f(x) + c \alpha^2$. Summing these two  inequalities, and setting $y = x + \alpha d$, we obtain \begin{equation}\label{eq:shs68s9s} 0 < \< \nabla f(x), \alpha d > + c \alpha^2 + \frac{L}{2} \| \alpha d \|^2. \end{equation}
Let $d \in D$ be such that $\mu \| \nabla f(x) \| \| d \| = \mu \| \nabla f(x) \| \leq -\ve{\nabla f(x)}{d}$ (see \eqref{eq:js8js8s}). Inequality \eqref{eq:lem1} follows by multiplying this inequality by $\alpha$, adding it to \eqref{eq:shs68s9s} and rearranging the result. 
\end{proof}

\subsection{Nonconvex case}\label{sec:2nonconvex} 

In this section, we state our most general complexity result -- one that does not require any additional assumptions on $f$, besides smoothness and boundedness. In particular, it applies to non-convex objective functions.

\begin{theorem}[Nonconvex case] Let Assumptions  \ref{ass:smooth} ($f$ is $L$-smooth) and \ref{ass:D} ($D$ is a positive spanning set) be satisfied. Choose initial iterate $x_0\in \R^n$ and initial stepsize parameter $\alpha_0>0$. Then the iterates $k\geq 1$ of Algorithm~\ref{alg:dfo} satisfy:
\begin{equation}\label{eq:iuh8hd}
\|\nabla f(x_k)\| \leq  \frac{\left( \frac{L}{2}+c\right) \alpha_0}{\mu 2^k}.
\end{equation}
Pick any $0<\epsilon<(\tfrac{L}{2}+c)\tfrac{\alpha_0}{\mu}$. If, morevoer,  Assumption~\ref{ass:bound}  ($f$ is bounded below) is satisfied and we set  \begin{equation}
\label{eq:sjs8jss}k\geq k(\epsilon) \eqdef \left \lceil \log_2 \left( \frac{ (\tfrac{L}{2}+ c)\alpha_0}{\mu \epsilon} \right)  \right \rceil,\end{equation} then $\left\| \nabla f(x_{k(\epsilon)}) \right\| \leq \epsilon$, while the method performs in total at most 
\begin{equation} 
N(k(\epsilon)) \leq 1 + |D| \left( k(\epsilon) +   \frac{16(f(x_0) - f^*)(\frac{L}{2}+c)^2 }{3c \mu^2 \epsilon^2}\right) 
\label{eq:nonconvexBound}
\end{equation}
function evaluations. 
\label{thm:nonconvex}
\end{theorem}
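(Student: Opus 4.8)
The plan is to prove the three assertions in order, relying on three ingredients already in place: the key lemma (Lemma~\ref{lem1}), the bookkeeping of Lemma~\ref{lem:firstobserv}, and the explicit stepsize recursion $\alpha_k = \alpha_0/2^k$ built into Algorithm~\ref{alg:dfo}. First I would establish the gradient bound \eqref{eq:iuh8hd}. The iterate $x_k$ is by construction \emph{unsuccessful}, so part (iii) of Lemma~\ref{lem:firstobserv} supplies exactly $f(x_k + \alpha_k d) > f(x_k) - c\alpha_k^2$ for all $d\in D$, which is the hypothesis of Lemma~\ref{lem1} with $x=x_k$ and $\alpha=\alpha_k$. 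Applying that lemma gives $\|\nabla f(x_k)\| \leq \mu^{-1}(\tfrac{L}{2}+c)\alpha_k$, and substituting $\alpha_k = \alpha_0/2^k$ yields \eqref{eq:iuh8hd} at once. The iteration count $k(\epsilon)$ is then read off directly: the right-hand side of \eqref{eq:iuh8hd} falls below $\epsilon$ as soon as $2^k \geq (\tfrac{L}{2}+c)\alpha_0/(\mu\epsilon)$, i.e. as soon as $k \geq \log_2\!\big((\tfrac{L}{2}+c)\alpha_0/(\mu\epsilon)\big)$, and taking the ceiling produces \eqref{eq:sjs8jss}. The standing requirement $\epsilon < (\tfrac{L}{2}+c)\alpha_0/\mu$ forces the logarithm to be positive, hence $k(\epsilon)\geq 1$, so the bound \eqref{eq:iuh8hd} (claimed only for $k\geq 1$) is genuinely available at $k=k(\epsilon)$.

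The remaining, and only computational, step is the function-evaluation count \eqref{eq:nonconvexBound}. By Lemma~\ref{lem:firstobserv}(ii) we have $N(k) = 1 + |D|\sum_{j=1}^k(l_j+1) = 1 + |D|\big(k + \sum_{j=1}^k l_j\big)$, so the whole matter reduces to bounding the cumulative number of successful steps $\sum_{j=1}^{k(\epsilon)} l_j$. Here I would invoke the middle inequality of \eqref{eq:l_k} in the form $l_j \leq (f(x_0)-f^*)/(c\alpha_j^2) = 4^j(f(x_0)-f^*)/(c\alpha_0^2)$, sum the geometric series via $\sum_{j=1}^{k}4^j \leq \tfrac{4}{3}\,4^k$, and finally control the last term using the definition of $k(\epsilon)$: since $\lceil x\rceil < x+1$, one gets $2^{k(\epsilon)} \leq 2(\tfrac{L}{2}+c)\alpha_0/(\mu\epsilon)$ and hence $4^{k(\epsilon)} \leq 4(\tfrac{L}{2}+c)^2\alpha_0^2/(\mu^2\epsilon^2)$. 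Assembling these three estimates gives $\sum_{j=1}^{k(\epsilon)} l_j \leq 16(f(x_0)-f^*)(\tfrac{L}{2}+c)^2/(3c\mu^2\epsilon^2)$, which plugged into the formula for $N(k(\epsilon))$ is precisely \eqref{eq:nonconvexBound}.

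The main obstacle to watch is this last estimate. Because the stepsizes shrink geometrically, the per-outer-iteration bound $l_j \leq 4^j(f(x_0)-f^*)/(c\alpha_0^2)$ grows exponentially in $j$, so at first glance the total workload $\sum_j l_j$ appears uncontrolled. The resolution is that a geometric sum of ratio $4$ is dominated by a fixed multiple ($\tfrac{4}{3}$) of its final term, and that final term $4^{k(\epsilon)}$ is exactly the quantity the choice of $k(\epsilon)$ pins down to order $1/\epsilon^2$; this is what converts the exponentially-growing work-per-outer-iteration into the clean $O(1/\epsilon^2)$ total. The only care required is the ceiling estimate above and the verification that $\epsilon < (\tfrac{L}{2}+c)\alpha_0/\mu$ guarantees $k(\epsilon)\geq 1$.
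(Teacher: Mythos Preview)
Your proposal is correct and follows essentially the same approach as the paper's own proof: invoke Lemma~\ref{lem1} via the unsuccessful-step condition to get \eqref{eq:iuh8hd}, read off $k(\epsilon)$, then bound $N(k(\epsilon))$ by summing the per-iteration estimate $l_j \le 4^j(f(x_0)-f^*)/(c\alpha_0^2)$ from \eqref{eq:l_k}, using the geometric-sum bound $\sum_{j=1}^{k}4^j \le \tfrac{4}{3}4^k$ and the ceiling estimate $4^{k(\epsilon)} \le 4(\tfrac{L}{2}+c)^2\alpha_0^2/(\mu^2\epsilon^2)$. Your write-up is in fact more explicit than the paper's about why $k(\epsilon)\ge 1$ and about the ceiling arithmetic, but the argument is the same.
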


\begin{proof}
Inequality \eqref{eq:iuh8hd} follows from \eqref{eq:lem1} by construction of $x_k$ (see  \eqref{eq:js8s987987}) and $\alpha_k$. Since $k(\epsilon)\geq 1$, 
\[\|\nabla f(x_{k(\epsilon)})\| \overset{\eqref{eq:iuh8hd}}{\leq}  \frac{\left(\frac{L}{2}+c\right) \alpha_0}{\mu 2^{k(\epsilon)}}  \overset{\eqref{eq:sjs8jss}}{\leq } \epsilon.\] 

By substituting   the second estimate in \eqref{eq:l_k} into \eqref{eq:js9jd8dd} and using the fact that $\alpha_k=\alpha_0/ 2^k$, $k\geq 0$, we obtain the bound
\begin{equation}\label{eq:jsihsh} N(k) \leq 1 + k|D| + \frac{4(4^{k}-1)|D|}{3c \alpha_0^2}(f(x_0)-f^*),\end{equation}
from which with $k=k(\epsilon)$ we get \eqref{eq:nonconvexBound}.
\end{proof}

We shall now briefly comment the above result.
\begin{itemize}
\item Notice that we do not enforce Assumption~\ref{ass:init} -- no initialization is needed if $f$ is not convex. This is because in the nonconvex case the best bound on $l_1$ is the one we have used in the analysis (given by \eqref{eq:l_k}) -- and it is not improved by enforcing Assumption~\ref{ass:init}. As we shall see, in the convex and strongly convex cases a better bound on $l_1$ is available if we  enforce Assumption~\ref{ass:init} (and, as we have seen in Section~\ref{sec:init}, initialization is cheap), which leads to better  complexity.
\item In the algorithm we have freedom in choosing $c$. It is easy to see that the choice $c = \tfrac{L}{2}$ minimizes the dominant term in the complexity bound \eqref{eq:nonconvexBound}, in which case the bound takes the form
\begin{equation} 
\mathcal{O} \left( \frac{|D|}{\mu^2}\frac{ L (f(x_0)-f^*)}{ \epsilon^2} \right).\label{eq:nonconvexCBound} 
\end{equation}
Needless to say, in a derivative-free setting the value of $L$ is usually not available and hence usually one cannot choose $c=\tfrac{L}{2}$. For $c=O(1)$, the complexity depends quadraticaly  on $L$. 

\item If $D$ is chosen to be the ``maximal positive basis" (see \eqref{eq:muexample}), the bound \eqref{eq:nonconvexCBound} reduces to
$$ \mathcal{O} \left( \frac{n^2 L (f(x_0)-f^*)}{\epsilon^2} \right). $$
This form of the result was used in Table~\ref{tab:summary}.
\end{itemize}

\subsection{Convex case} \label{sec:2convex} 

In this section, we analyze the method under the additional assumption that $f$ is convex. For technical reasons, we also assume that the problem is solvable (i.e., that it has a minimizer $x_*$) and that, given an initial iterate $x_0 \in \R^n$, the quantity
\begin{equation} 
\label{eq:R}
R_0 \eqdef  \sup_{x \in \R^n} \{ \| x - x_* \| \;: \; f(x) \leq f(x_0) \}
\end{equation}
is finite. Further, for convenience we define
\begin{equation} 
\label{eq:B}
B \eqdef \frac{R_0 (\frac{L}{2}+c)}{\mu}.
\end{equation}

We are now ready to state the complexity result.
\begin{theorem}[Convex case] 
\label{thm:convex}
Let Assumptions \ref{ass:smooth} ($f$ is $L$-smooth) and \ref{ass:D} ($D$ is a positive spanning set) be satisfied. Further assume that $f$ is convex, has a minimizer $x_*$ and  $R_0<\infty$ for some initial iterate  $x_0\in \R^n$. Finally, let Assumption~\ref{ass:init} (initialization) be satisfied.  Then:
\begin{itemize}
\item [(i)] The iterates $\{x_k\}_{k\geq 0}$ of Algorithm~\ref{alg:dfo} satisfy 
\begin{equation} \label{eq:shsts}\left\| \nabla f(x_k) \right\| \leq \frac{(\tfrac{L}{2} + c  ) \alpha_0}{ \mu 2^k}, \quad f(x_k) - f(x_*)\leq  \frac{B \alpha_0}{2^k}, \quad k \geq 0, \end{equation}
where at iteration $k$ the method needs to perform at most $|D| \left( 1 + \frac{2^{k+1} B}{c \alpha_0} \right)$ function evaluations.
\item[(ii)]
In particular, if we set $k = k(\epsilon) \eqdef \left\lceil \log_2 \left( \frac{B \alpha_0}{\epsilon} \right) \right\rceil$, where $0<\epsilon\leq B\alpha_0$, then  $f(x_k) - f(x_*) \leq \epsilon$, while Algorithm~\ref{alg:dfo} performs in total  at most
\begin{equation} 
\label{eq:complexity-convex}
N(k(\epsilon)) \leq 1+ |D| \left( k(\epsilon) + \frac{8 B^2}{c \epsilon} \right)
\end{equation}
function evaluations.
\end{itemize}
\end{theorem}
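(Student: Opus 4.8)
The plan is to follow the three-part template the authors advertise: first bound the gradient norm at each unsuccessful iterate $x_k$ using the key lemma, then convert this into a bound on $f(x_k)-f(x_*)$ via convexity, and finally count function evaluations by bounding the number of successful steps $l_k$ at each outer iteration. The improvement over the nonconvex case will come entirely from the last step, where convexity lets me replace the crude bound on $l_k$ by a geometric one.

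First I would establish the gradient bound \eqref{eq:shsts}. For $k\geq 1$, part (iii) of Lemma~\ref{lem:firstobserv} guarantees that the unsuccessful condition $f(x_k+\alpha_k d)>f(x_k)-c\alpha_k^2$ holds for every $d\in D$, so Lemma~\ref{lem1} applied with $x=x_k$ and $\alpha=\alpha_k=\alpha_0/2^k$ gives $\|\nabla f(x_k)\|\leq \frac{1}{\mu}(\frac{L}{2}+c)\alpha_k=\frac{(\frac{L}{2}+c)\alpha_0}{\mu 2^k}$. The index $k=0$ is exactly where Assumption~\ref{ass:init} earns its keep: relation \eqref{eq:hss8jss} is the $k=0$ instance of the unsuccessful condition, so the same application of Lemma~\ref{lem1} covers it.

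Next I would pass to function values. By convexity, $f(x_k)-f(x_*)\leq \langle \nabla f(x_k), x_k-x_*\rangle \leq \|\nabla f(x_k)\|\,\|x_k-x_*\|$. Since SDS is monotone (Lemma~\ref{lem:firstobserv}(i)), $f(x_k)\leq f(x_0)$, so $x_k$ lies in the level set defining $R_0$ and hence $\|x_k-x_*\|\leq R_0$; combined with the gradient bound and the definition $B=R_0(\frac{L}{2}+c)/\mu$ this yields $f(x_k)-f(x_*)\leq R_0\|\nabla f(x_k)\|\leq B\alpha_0/2^k$. The crucial and most delicate step is then the count: the generic estimate \eqref{eq:l_k} gives $l_k\leq (f(x_{k-1})-f(x_k))/(c\alpha_k^2)$, but rather than telescoping I would bound $f(x_{k-1})-f(x_k)\leq f(x_{k-1})-f(x_*)$ and substitute the function-value estimate just proved at index $k-1$, namely $f(x_{k-1})-f(x_*)\leq B\alpha_0/2^{k-1}$. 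With $\alpha_k=\alpha_0/2^k$ this collapses to $l_k\leq 2^{k+1}B/(c\alpha_0)$, so the per-iteration cost is at most $|D|(l_k+1)\leq |D|(1+2^{k+1}B/(c\alpha_0))$. Replacing the $O(4^k)$ bound on $l_k$ (which is all the nonconvex analysis can afford) by this geometric $O(2^k)$ bound is the heart of the argument; everything else is bookkeeping.

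For part (ii), the choice $k(\epsilon)=\lceil \log_2(B\alpha_0/\epsilon)\rceil$ forces $2^{k(\epsilon)}\geq B\alpha_0/\epsilon$, hence $f(x_{k(\epsilon)})-f(x_*)\leq \epsilon$. Summing the per-iteration costs through \eqref{eq:js9jd8dd} gives $N(k(\epsilon))\leq 1+|D|\big(k(\epsilon)+\frac{2B}{c\alpha_0}\sum_{j=1}^{k(\epsilon)}2^j\big)$; the geometric sum is at most $2^{k(\epsilon)+1}$, while the ceiling yields $2^{k(\epsilon)}\leq 2B\alpha_0/\epsilon$. Substituting turns the last term into $\frac{4B}{c\alpha_0}\cdot\frac{2B\alpha_0}{\epsilon}=\frac{8B^2}{c\epsilon}$, which is precisely \eqref{eq:complexity-convex}. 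I expect no real obstacle in this final paragraph — the only care needed is tracking the constants produced by the geometric sum and the ceiling function.
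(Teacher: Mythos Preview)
Your proposal is correct and follows essentially the same route as the paper: gradient bound from Lemma~\ref{lem1} (with Assumption~\ref{ass:init} covering $k=0$), conversion to function values via convexity and the level-set radius $R_0$, and the key improved bound $l_k\leq 2^{k+1}B/(c\alpha_0)$ obtained by using $f(x_{k-1})-f(x_*)\leq B\alpha_{k-1}$ in place of the crude nonconvex estimate. The final summation and handling of the ceiling to reach the constant $8B^2/(c\epsilon)$ are also identical to the paper's computation.
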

\begin{proof}
The first part of \eqref{eq:shsts}, for $k\geq 1$, follows from Theorem~\ref{thm:nonconvex}. For $k=0$ it follows by combining Assumption~\ref{ass:init} and Lemma~\ref{lem1}. In order to establish the second part of \eqref{eq:shsts},  
it suffices to note that  since $f(x_k)\leq f(x_0)$ for all $k$ (see Lemma~\ref{lem:firstobserv}(i)), we have for all $k\geq 0$:
\begin{equation}\label{eq:siu87gdd}f(x_k) - f(x_*) \leq \langle \nabla f(x_k), x_k - x_* \rangle \leq \| \nabla f(x_k) \| \| x_k - x_* \| \overset{ \eqref{eq:R} }{\leq}  \|\nabla f(x_k)\| R_0 \overset{\eqref{eq:shsts}+ \eqref{eq:B}}{\leq} B \alpha_k.\end{equation} 
It only remains to establish \eqref{eq:complexity-convex}. Letting $r_k = f(x_k)-f^*$, and using \eqref{eq:js8s} and \eqref{eq:siu87gdd},  we have $0\leq r_k\leq r_{k-1}  - l_k c \alpha_k^2 \leq B \alpha_{k-1} - l_k c \alpha_k^2 = 2B \alpha_{k} - l_k c \alpha_k^2 $, whence  \begin{equation} \label{eq:jsuhs}l_k \leq \frac{2B}{c \alpha_k} = \frac{2^{k+1}B}{c \alpha_0}.\end{equation}
We can now estimate the total number of function evaluations by plugging \eqref{eq:jsuhs} into   \eqref{eq:js9jd8dd}:
\begin{eqnarray*}
N(k(\epsilon)) &\overset{\eqref{eq:js9jd8dd}}{=}  & 1+ \sum_{k=1}^{k(\epsilon)}|D| (l_k+1) \;\; \overset{\eqref{eq:jsuhs}}{\leq} \;\; 1+  |D| \sum_{k=1}^{k(\epsilon)} \left( \frac{2B}{c \alpha_k} +1\right) \;\; =\;\; 1 + |D| k(\epsilon) + \frac{2B |D|}{c \alpha_0} \sum_{k=1}^{k(\epsilon)} 2^k \\
& \leq & 1 + |D| k(\epsilon) + \frac{2B|D|}{c\alpha_0} 2^{k(\epsilon)+1} \;\; \leq \;\; 1+  |D| \left( k(\epsilon) + \frac{8 B^2}{c \epsilon} \right).
\end{eqnarray*} 
\end{proof}

We shall now comment the result.
\begin{itemize}
\item Note that Assumption~\ref{ass:init} was used to argue that $r_0\leq B \alpha_0$, which was in turn used to bound $l_1$. The  bounds on $l_k$ for $k>1$ hold even without this assumption. Alternatively, we could have skipped Assumption~\ref{ass:init} and bounded $l_1$ as in Theorem~\ref{thm:nonconvex}. Relations \eqref{eq:shsts} would hold for $k\geq 1 $.

\item Again, we have freedom in choosing $c$ (and  note that $c$ appears also in the definition of $B$). It is easy to see that the choice $c=\tfrac{L}{2}$ minimizes the dominating term $\frac{B^2}{c}$ in the complexity bound \eqref{eq:complexity-convex}, in which case $B=\tfrac{LR_0}{\mu}$, $\tfrac{B^2}{c}=\tfrac{2LR_0^2}{\mu^2}$ and the  bound \eqref{eq:complexity-convex} takes the form
\begin{equation} 
1+|D| \left[ \left\lceil \log_2 \left(\frac{L R_0 \alpha_0}{\mu \epsilon} \right) \right\rceil + \frac{16 L R_0^2}{\mu^2 \epsilon} \right] = {\cal O}\left(\frac{|D|}{\mu^2} \frac{L R_0^2 }{ \epsilon} \right).\label{eq:bound 2} 
\end{equation}

\item If $D$ is chosen to be the ``maximal positive basis" (see \eqref{eq:muexample}), the bound \eqref{eq:bound 2} reduces to
\[{\cal O}\left(\frac{n^2 L R_0^2 }{\epsilon}\right).\]
The result is listed in this form in Table~\ref{tab:summary}.

\item It is possible to improve the algorithm by introducing an additional stopping criterion: $l_k \geq \tfrac{B}{c \alpha_k}$. The analysis is almost the same, and the resulting number of function evaluations is halved in this case. However, this improvement is rather theoretical, since we typically do not know the value of $B$.
\end{itemize}

\subsection{Strongly convex case}\label{sec:2strconvex} 

In this section we introduce an additional assumption:  $f$ is $\lambda$-strongly convex for some (strong convexity) constant $\lambda >0$. That is, we require that $\forall x, y \in \R^n$, we have
\begin{equation}
\label{eq:strconv}
f(y) \geq f(x) + \< \nabla f(x), y - x > + \frac{\lambda}{2} \|  y - x \|^2.
\end{equation}
In particular, by minimizing both sides of the above inequality in $y$, we obtain the standard inequality
\begin{equation}
\label{eq:stroconvexproperty}
f(x) - f(x_*) \leq \frac{1}{2\lambda} \| \nabla f(x) \|^2.
\end{equation}
Moreover, by substituting  $y \leftarrow x$ and $x \leftarrow x_*$ into \eqref{eq:strconv}, and using the fact that $\nabla f(x_*)=0$, we obtain another well known inequality:
\begin{equation}\label{eq:js8gs098}
\frac{\lambda}{2}\|x-x_*\|^2 \leq f(x)-f(x_*).
\end{equation}
To simplify  notation, in what follows we will make use of the following quantity:
\begin{equation}
\label{eq:Bstrcvx}
S \eqdef \frac{( \tfrac{L}{2} +c)^2}{2 \lambda \mu^2}.
\end{equation}

\begin{theorem} [Strongly convex case]
\label{thm:strconvex}
Let Assumptions \ref{ass:smooth} ($f$ is $L$-smooth), \ref{ass:D} ($D$ is a positive spanning set) and \ref{ass:init} (initialization) be satisfied.
Further, assume that $f$ is $\lambda$-strongly convex. Then:
\begin{itemize}
\item[(i) ] The iterates $\{x_k\}_{k\geq 0}$ of Algorithm~\ref{alg:dfo} satisfy 
\begin{equation}\label{eq:hjs87ysb} \left\| \nabla f(x_k) \right\| \leq \frac{(\tfrac{L}{2} + c) \alpha_0}{2^k \mu}, \quad f(x_k) - f(x_*) \leq S \left( \frac{\alpha_0}{2^k} \right)^2, \quad  \|x_k-x_*\| \leq \frac{(\tfrac{L}{2} + c) \alpha_0}{2^k \mu \lambda}, \end{equation}
where at each iteration the method needs to perform at most $|D|\left(  \frac{4S}{c} +1\right)$ function evaluations.

\item[(ii)] In particular, if we set $ k = k(\epsilon) \eqdef \left\lceil \log_2 \left( \alpha_0 \sqrt{\frac{S}{\epsilon}} \right) \right\rceil$, where $0<\epsilon< S\alpha_0^2$, then  $f(x_k) - f(x_*) \leq \epsilon$, while the method performs in total at most
\begin{equation}
N(k(\epsilon)) \leq 1 + |D| \left(  \frac{4S}{c} + 1\right) \left( 1 + \log_2 \left( \alpha_0 \sqrt{\frac{S}{\epsilon}} \right) \right)
\label{eq:sjs8sjs}
\end{equation}
function evaluations.
\end{itemize}
\end{theorem}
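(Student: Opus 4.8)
The plan is to follow the same three-part template used for the nonconvex and convex theorems: first establish the gradient decay, then convert it into guarantees on the function value and on the distance to the minimizer using strong convexity, and finally bound the per-iteration workload $l_k$ and sum it up. First I would establish the first inequality in \eqref{eq:hjs87ysb}, namely $\|\nabla f(x_k)\| \leq (\tfrac{L}{2}+c)\alpha_0/(2^k\mu)$. For $k \geq 1$ this is exactly \eqref{eq:iuh8hd} from Theorem~\ref{thm:nonconvex}; for $k=0$ it follows by applying Lemma~\ref{lem1} to the point $x_0$ with stepsize $\alpha_0$, which is legitimate precisely because Assumption~\ref{ass:init} guarantees \eqref{eq:hss8jss}. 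With the gradient bound in hand, the second and third inequalities in \eqref{eq:hjs87ysb} are immediate consequences of strong convexity: squaring the gradient bound and feeding it into \eqref{eq:stroconvexproperty} yields $f(x_k)-f(x_*) \leq \tfrac{1}{2\lambda}\|\nabla f(x_k)\|^2 \leq S(\alpha_0/2^k)^2$, where the definition \eqref{eq:Bstrcvx} of $S$ is exactly what makes the constants collapse. Plugging this function-value bound into \eqref{eq:js8gs098} and taking square roots then gives the distance bound $\|x_k-x_*\| \leq (\tfrac{L}{2}+c)\alpha_0/(2^k\mu\lambda)$.

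The next step, and the one carrying the real content, is to bound $l_k$. From the sufficient-decrease relations \eqref{eq:js8s} we have $l_k c\alpha_k^2 \leq f(x_{k-1})-f(x_k) \leq f(x_{k-1})-f(x_*)$, and the function-value bound just established (applied at index $k-1$) gives $f(x_{k-1})-f(x_*) \leq S\alpha_{k-1}^2 = 4S\alpha_k^2$, since $\alpha_{k-1}=2\alpha_k$. The key observation is that the factor $\alpha_k^2$ now cancels, leaving the $k$-independent bound $l_k \leq 4S/c$; this is exactly where the strongly convex case departs from the convex case, in which the analogous bound \eqref{eq:jsuhs} grew like $2^k$. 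Combined with Lemma~\ref{lem:firstobserv}(ii), this shows each iteration costs at most $|D|(l_k+1) \leq |D|(4S/c+1)$ function evaluations, which completes part (i).

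For part (ii), I would choose $k(\epsilon)=\lceil \log_2(\alpha_0\sqrt{S/\epsilon})\,\rceil$ so that $4^{k(\epsilon)} \geq S\alpha_0^2/\epsilon$, which by the second inequality in \eqref{eq:hjs87ysb} forces $f(x_{k(\epsilon)})-f(x_*) \leq \epsilon$. Summing the constant per-iteration cost from part (i) over the $k(\epsilon)$ iterations via \eqref{eq:js9jd8dd} gives $N(k(\epsilon)) \leq 1 + |D|(4S/c+1)k(\epsilon)$, and bounding the ceiling by $k(\epsilon) \leq 1+\log_2(\alpha_0\sqrt{S/\epsilon})$ yields \eqref{eq:sjs8sjs}. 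I do not expect any serious obstacle: the only point requiring care is the feedback step, namely recognizing that the function-value guarantee must be invoked at index $k-1$ (not $k$) to bound $l_k$, and that the resulting cancellation of $\alpha_k^2$ is precisely what produces a constant---rather than geometrically growing---number of inner steps, and hence the logarithmic total complexity.
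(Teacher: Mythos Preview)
Your proposal is correct and follows essentially the same approach as the paper: establish the gradient bound via Lemma~\ref{lem1} (with the $k=0$ case handled by Assumption~\ref{ass:init}), deduce the function-value and distance bounds from \eqref{eq:stroconvexproperty} and \eqref{eq:js8gs098}, use the function-value bound at index $k-1$ to show $l_k \leq 4S/c$ via the cancellation of $\alpha_k^2$, and then sum over $k$. The only cosmetic difference is that the paper derives the distance bound by combining \eqref{eq:js8gs098} with \eqref{eq:stroconvexproperty} directly to get $\|x_k-x_*\| \leq \|\nabla f(x_k)\|/\lambda$, whereas you route through the function-value bound first; both yield the same inequality.
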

\begin{proof}
The first part of \eqref{eq:hjs87ysb} was already proved in the convex case \eqref{eq:shsts}; the rest follows from
\[ f(x_k)-f(x_*) \overset{\eqref{eq:stroconvexproperty}}{\leq} \tfrac{1}{2\lambda}\| \nabla f(x_k) \|^2 \overset{\eqref{eq:hjs87ysb} + \eqref{eq:Bstrcvx} }{\leq} S \alpha_k^2;\quad 
\|x_k-x_*\| \overset{\eqref{eq:js8gs098}+\eqref{eq:stroconvexproperty}}{\leq} \frac{\|\nabla f(x_k)\|}{\lambda} \overset{\eqref{eq:hjs87ysb}}{\leq} \frac{(\tfrac{L}{2}+c) \alpha_0}{2^k \mu \lambda}.\]

It only remains to establish the bound \eqref{eq:sjs8sjs}. Letting $r_k = f(x_k)-f^*$,  and using \eqref{eq:js8s} and the second inequality in \eqref{eq:hjs87ysb}, we have $0\leq r_k\leq r_{k-1}  - l_k c \alpha_k^2 \leq S (2 \alpha_{k})^2 - l_k c \alpha_k^2$, whence  \begin{equation} \label{eq:jsuhs}l_k \leq \frac{4S\alpha_k^2}{c \alpha_k^2} = \frac{4S}{c }.\end{equation}
We can now estimate the total number of function evaluations by plugging \eqref{eq:jsuhs} into  \eqref{eq:js9jd8dd}:
\begin{eqnarray*}N(k(\epsilon)) &\overset{\eqref{eq:js9jd8dd}}{=}  & 1+ \sum_{k=1}^{k(\epsilon)}|D| (l_k+1) \;\; \overset{\eqref{eq:jsuhs}}{\leq}\;\; 1+ \sum_{k=1}^{k(\epsilon)}|D| \left(\frac{4S}{c} +1\right) \;\; =\;\; 1 + |D| \left(\frac{4S}{c} +1\right) k(\epsilon).\end{eqnarray*}
\end{proof}

Let us now comment on the  result.

\begin{itemize}

\item As before, in the algorithm we have freedom in choosing $c$. Choosing $c = \tfrac{L}{2}$ minimizes the dominating term $\frac{S}{c}$ in the complexity bound~\eqref{eq:sjs8sjs}, in which case $S = \tfrac{L^2}{2\lambda \mu^2}$, $\tfrac{S}{c} = \tfrac{L}{\lambda \mu^2}$ and the bound~\eqref{eq:sjs8sjs} takes the form
\begin{equation} 
\label{eq:098jsusjs} 
1 + |D| \left(1 + \frac{4L}{\lambda \mu^2} \right) \left( 1 + \log_2 \left( \frac{\alpha_0 L}{\mu} \sqrt{\frac{1}{2 \lambda \epsilon}} \right) \right).
\end{equation}


\item If $D$ is chosen to be the ``maximal positive basis" (see~\eqref{eq:muexample}), the bound \eqref{eq:098jsusjs} reduces to $$ {\cal O}\left( \frac{n^{2} L}{\lambda} \log_2 \left( \frac{n L^2 \alpha_0^2}{\lambda \epsilon} \right) \right) = \tilde{\cal O} \left(n^2\frac{L}{\lambda}\right),$$
where the $\tilde{\cal O}$ notation suppresses the logarithmic term. The complexity is proportional to the condition number $L/\mu$.

\item As in the convex case, we can introduce the additional stopping criterion $l_k \leq \tfrac{3S}{c}$. The analysis is similar and the bound on function evaluation can be reduced by the factor of $4/3$. However, in practice we often do not know  $S$.

\end{itemize}


\section{Conclusion} \label{sec:conclusion}

We proposed and analyzed the complexity of SDS -- a simplified variant of the direct search method. Thanks to the simplified design of our method, and two novel and efficient initialization strategies, our method depends on a single parameter only. This contrasts with  standard direct search which depends on a large number of parameters. We gave the {\em first unified analysis}, covering three classes of unconstrained smooth minimization problems: non-convex, convex and strongly convex. The analysis follows the same pattern in all three cases. Finally, our complexity bounds have a simple form, are easy to interpret, and are better than existing bounds in the convex case.


\bibliography{references}

\section*{Appendix:  Proof Sketch of Proposition \ref{prop:D}}

\textbf{Proposition 7.} 
{
\em
Let $D$ be a finite set of nonzero vectors. Then the following statements are equivalent:
\begin{itemize}
\item[(i)] $D$ is a positive spanning set.
\item[(ii)] The cosine measure of $D$ is positive:
\begin{equation}\label{eq:hjs787gasassss} \mu \eqdef \min_{0\neq v\in \R^n} \max_{d\in D} \frac{\ve{v}{d}}{\|v\|\|d\|} > 0.\end{equation}
Above, $\ve{\cdot}{\cdot}$ is the standard Euclidean inner product and $\|\cdot\|$ is the standard Euclidean norm. 
\item[(iii)] The convex hull of $D$ contains $0$ in its interior.
\end{itemize}
}

\begin{proof}

(i)$\Leftrightarrow$(ii). Equivalence of (i) and (ii) can be shown by a separation argument. 

(ii)$\Rightarrow$(iii). Let ${\cal B}$ be the unit Euclidean ball in $\R^n$ centered at the origin. Note that 0 is in the interior of $Conv (D)$ if and only if it is  in the interior of ${\cal D}\eqdef Conv(D')$, where $D'=\{d/\|d\| \;:\; d\in D\}$. Further, note that in view of \eqref{eq:hjs787gasassss}, for every $0\neq v$ we have
\[\sigma_{{\cal D}}(v) \eqdef \max_{d'\in {\cal D}} \ve{v}{d'}
= \max_{d' \in D'} \ve{v}{d'} 
=\max_{d\in D} \frac{\ve{v}{d}}{\|d\|}
\overset{\eqref{eq:hjs787gasassss}}{\geq} \mu \|v\| .\]
On the other hand,  for  any $\nu>0$,  \[\sigma_{\nu {\cal B}}(v) \eqdef \max_{d'\in \nu {\cal B}} \ve{v}{d'}= \nu \|v\| .\]  If (ii) holds, then $\mu>0$, and hence $\sigma_{{\cal D}} \geq \sigma_{\mu {\cal B}}$, implying that ${\cal D}\supset \mu \cal B$, establishing (iii). 

(iii)$\Leftrightarrow$(i). If (iii) holds, then $Conv(D)\supset \nu {\cal B}$ for some $\nu>0$. Let $Cone(D)$ be the conic hull of $D$ (the smallest convex cone containing $D$). Since $Cone(D)\supset Conv(D) \supset \nu {\cal B}$,   we must have $tb\in Cone(D)$ for all $b\in {\cal B}$ and $t\geq 0$. That is, $Cone(D)=\R^n$, implying (i).
\end{proof}


\end{document}